\documentclass{amsart}
\usepackage{amsfonts}
\usepackage{amsmath,amssymb}
\usepackage{amsthm}
\usepackage{amscd}
\usepackage{graphics}
\usepackage{graphicx}

\theoremstyle{remark}{
\newtheorem{Def}{{\rm Definition}}
\newtheorem{Ex}{{\rm Example}}
\newtheorem{Rem}{{\rm Remark}}
\newtheorem{Prob}{{\rm Problem}}

}
\theoremstyle{plain}
{
\newtheorem{Cor}{Corollary}
\newtheorem{Prop}{Proposition}
\newtheorem{Thm}{Theorem}
\newtheorem{MainThm}{Main Theorem}

}

\begin{document}
\title[Smooth functions associated to similar graphs of functions]{Reeb spaces of smooth functions associated to globally similar graphs of smooth functions}
\author{Naoki kitazawa}
\keywords{Smooth, real analytic, or real algebraic (real polynomial) functions and maps. Reeb spaces. Cell complexes. Graphs. Digraphs. Reeb graphs. \\
\indent {\it \textup{2020} Mathematics Subject Classification}: Primary~26E05, 54C30, 57R45, 58C05.}

\address{Osaka Central Advanced Mathematical Institute (OCAMI) \\
3-3-138 Sugimoto, Sumiyoshi-ku Osaka 558-8585
TEL: +81-6-6605-3103
}
\email{naokikitazawa.formath@gmail.com}
\urladdr{https://naokikitazawa.github.io/NaokiKitazawa.html}
\maketitle
\begin{abstract}

Previously, we have investigated a natural smooth map onto the region surrounded by the graphs of two smooth real-valued functions in the plane converging to a same value or diverges to $+\infty$ or $-\infty$ simultaneously, at each infinity, and topological properties and combinatorial ones of its composition with the canonical projection. Here, we consider smooth functions with {\t congruent} or {\it globally similar} graphs instead.

Here, the {\it Reeb space} of a smooth function on a manifold with no boundary is fundamental and important. This is the naturally topologized quotient space of the manifold, consisting of all connected components ({\it contours}) of the function and is a graph under a certain nice situation. Studies related to the present study were started due to interest of the author in theory of Reeb spaces of so-called {\it non-proper} functions. For {\it proper} functions, in 2020s, related studies have developed mainly due to Gelbukh and Saeki.

\end{abstract}
\section{Introduction.}
\label{sec:1}

The region $D_{c_1,c_2}$ surrounded by the graphs $\{(c_i(x),x) \mid x \in \mathbb{R}\}$ of nice smooth real-valued functions $c_i:\mathbb{R} \rightarrow \mathbb{R}$ ($i=1,2$) in the plane ${\mathbb{R}}^2$ is of fundamental mathematical objects.

First, the author has been interested in natural smooth maps onto such regions, which are locally so-called {\it special generic} maps. The class of {\it special generic} maps is the class containing the canonical projection of the {\it $m$-dimensional unit sphere} $S^m:=\{x =(x_1,\cdots, x_{m+1}) \in {\mathbb{R}}^{m+1} \mid {\Sigma}_{j=1}^{m+1} {x_j}^2=1\}$ onto the {\it $n$-dimensional unit disk} $D^n:=\{x=(x_1,\cdots x_n) \in {\mathbb{R}}^n \mid {\Sigma}_{j=1}^{n} {x_j}^2 \leq 1\}$, where ${\mathbb{R}}^k$ denotes the {\it $k$-dimensional Euclidean space} with $\mathbb{R}:={\mathbb{R}}^1$, and with $m$ and $n$ here being positive integers satisfying $m \geq n \geq 1$. For special generic maps, consult \cite{burletderham, furuyaporto, saeki1} and for our construction, \cite{kitazawa3}, where we present related arguments later, in self-contained ways. Second, we are mainly interested in the function obtained as the composition of the map
$f_{c_1,c_2}:X_{m,c_1,c_2} \rightarrow {\mathbb{R}^2}$ from an $m$-dimensional smooth submanifold in ${\mathbb{R}}^{m+1}$ onto the closure $\overline{D_{c_1,c_2}}$ of the open region $D_{c_1,c_2} \subset {\mathbb{R}}^2$ taken in ${\mathbb{R}}^2$.
 with ${\pi}_{2,1}:{\mathbb{R}}^2 \rightarrow \mathbb{R}$, where ${\pi}_{k_1,k_2}:{\mathbb{R}}^{k_1} \rightarrow {\mathbb{R}}^{k_2}$ denotes the projection ${\pi}_{k_1,k_2}(x):=x_1$ with $x:=(x_1,x_2) \in {\mathbb{R}}^{k_2} \times {\mathbb{R}}^{k_1-k_2}={\mathbb{R}}^{k_1}$ with $k_1>k_2 \geq 1$ be integers. Here $f_{c_1,c_2}$ is also constructed as the restriction of ${\pi}_{m+1,2}$ to $X_{m,c_1,c_2}$.  We are also interested in the Reeb space $R_{{\pi}_{2,1} \circ f_{c_1,c_2}}$ of the resulting function ${\pi}_{2,1} \circ f_{c_1,c_2}$. This is, roughly, the space of all connected components ({\it contours}) of {\it level sets} of the function.

Before explaining Reeb spaces rigorously, we introduce notation on differentiable manifolds and maps. The tangent vector space of a differentiable manifold $X$ at $p \in X$ is denoted by $T_p X$, which is a real vector space of dimension same as that of $X$.
For a differentiable map $f:X_1 \rightarrow X_2$ between differentiable manifolds, a {\it singular} point of it is a point $p \in X$ where the rank of the differential ${df}_p:T_p X_1 \rightarrow T_{f(p)} X_2$, which is of course a linear map, drops. We use $S(f)$ for the set of all singular points of $f$ and this is the {\it singular set} of $f$. 
The image $f(S(f))$ is the {\it singular value set} of $f$, where $f(p)$ ($p \in S(f)$) is a {\it singular value} of $f$.
In the case $X_2=\mathbb{R}, S^1$, or generally the case where $X_2$ is a $1$-dimensional manifold with no boundary, we use "{\it critical}" instead of "singular". We explain the {\it Reeb space} $R_c$ of a (continuous) real-valued function $c:X \rightarrow \mathbb{R}$ on a topological space $X$. The set $c^{-1}(q)$ is a {\it level set} of $c$ and a connected component of this is a {\it contour} of $c$. In the case of a differentiable function $c$, we add "{\it regular}" ("{\it critical}") before "contour" and "level set" if it contains no critical point (resp. some critical points) of $c$. We can induce the equivalence relation ${\sim}_c$ on $X$ as follows. We have $x_1 {\sim}_c x_2$ if and only if $x_1$ and $x_2$ are points of a same contour of $c$. We can have the quotient map $q_c:X \rightarrow R_c:=X/{{\sim}_c}$ and the unique continuous function $\bar{c}:R_c \rightarrow \mathbb{R}$ with $c=\bar{c} \circ q_c$. 

\begin{Prob}
\label{prob:1}
Does $R_c$ have the structure of a graph? 
\end{Prob}

Hereafter, a {\it graph} means a $1$-dimensional connected, locally compact and locally finite CW complex whose $1$-cell is called an {\it edge}, whose $0$-cell is called a {\it vertex}, and the closure of whose edge is homeomorphic to $D^1$. A {\it graph with ends} is defined by weakening the last condition on the closure of an edge, as a manifold homeomorphic to $D^1$ or $\{t>0\} \subset \mathbb{R}$. The {\it index} of a vertex is the number of edges incident to it. For (CW) complexes, see \cite{hatcher} for example.

$R_c$ is "1-dimensional", according to \cite{gelbukh1, gelbukh2, saeki2, saeki3}. This does not contradict our intuition.
 In the case of a so-called {\it Morse}({\it -Bott}) function $c$ on a compact and connected manifold, see \cite{izar, martinezalfaromezasarmientooliveira}. More specifically, in the case where $X$ is a smooth manifold with no boundary with $c:X \rightarrow \mathbb{R}$ being proper and $c(S(c))$ being discrete, $R_c$ is a graph (with ends) whose vertex set consists of all points for critical contours of $c$ and the {\it Reeb graph} of $c$. This has the structure of a digraph (with ends), by the following rule: an edge $e$ incident to a vertex $v_{e}$ is an oriented edge departing from (resp. entering) $v_{e}$ if the restriction of $\bar{c}$ to the closure of $e$ in the graph has the minimum (resp. maximum) at $v_{e}$. This is the {\it Reeb digraph} of $c$.

\begin{Prob}
\label{prob:2}
Can we construct a (nice) smooth function whose
Reeb space is the Reeb graph, isomorphic to a given graph (resp. with ends)?
\end{Prob}

This is one of fundamental, natural and incredibly new problems related to our studies. Sharko has launched this in 2006 (\cite{sharko}). This is a study on reconstruction of smooth functions with prescribed Reeb graphs on suitable closed surfaces whose critical points $p$ are of the form $c(z)={\Re} z^{l}+c(p) \in \mathbb{R}$ ($z \in \mathbb{C}$) or critical points of Morse functions. Here, $\mathbb{C}$ is the space of all complex numbers, $l>1$ is an integer, and ${\Re} s$ denotes the real part of $s \in \mathbb{C}$. This is extended to arbitrary finite graphs in \cite{masumotosaeki} and functions whose critical points may not be isolated are studied. In \cite{michalak}, a certain class of finite graphs and reconstructing Morse functions on closed manifolds with prescribed Reeb graphs and regular contours being spheres are discussed. Contribution to this by the author is, \cite{kitazawa1, kitazawa4}, for example. The author respects not only graphs, but also shapes of level sets, first. Related to this, in \cite{kitazawa2}, the author considers smooth functions on non-compact manifolds and {\it non-proper} functions, first. 
Remember that a {\it proper} map between topological spaces is a map the preimage of each compact set by which is also compact. The paper \cite{kitazawa3} is a pioneering study on reconstruction of functions represented by equations with real polynomials and with prescribed Reeb graphs. Motivated by this success, the author has been contributing to real algebraic construction: we do not assume related knowledge, and we omit precise exposition on other related papers and preprints, except \cite{kitazawa5, kitazawa6}. Related arguments appear in our main ingredients and we discuss them in self-contained ways (Theorem \ref{thm:1}).

Recently the author has started to consider infinite graphs (with ends) or spaces non-homeomorphic to graphs as Reeb spaces, non-proper function cases, and obtained several explicit situations. Related examples are presented as counterexamples in \cite{gelbukh1, gelbukh2, saeki2, saeki3}. The paper \cite{kitazawa7} is related to this. Different from these cases, the author has constructed them as, fully or densely, real algebraic or real analytic in \cite{kitazawa7, kitazawa8}. Arguments in the \cite{kitazawa3} and the preprints \cite{kitazawa5, kitazawa6} play important roles. This paper is regarded as another case of \cite{kitazawa9}. 

We go back to the beginning of this section. In \cite{kitazawa9}, at each infinity, $c_1$ and $c_2$ converge to a same value or both diverges to the same infinity $+\infty$ or $-\infty$. We study the case the graphs of $c_1$ and $c_2$ are {\it congruent} in ${\mathbb{R}}^2$ or more generally, "{\it globally similar}". We present some new result related to the previous preprint \cite{kitazawa9}. We argue in self-contained ways as we can. We discuss Problems \ref{prob:1} and \ref{prob:2}, mainly, Problem \ref{prob:1}, for the map $f_{c_1,c_2}$, the function ${\pi}_{2,1} \circ f_{c_1,c_2}$, and the Reeb space $R_{{\pi}_{2,1} \circ f_{c_1,c_2}}$. This is also a main topic of \cite{kitazawa9}.
Hereafter, for smooth functions $c_i$, we concentrate on smooth functions whose critical sets are disjoint unions of finitely many or countably many sets each of which is a one-point set, a copy of $D^1$, $\{t \geq 0\}$, or $\mathbb{R}$. 
For example, consider real analytic functions. We call such a function a {\it real-valued function on the real-line with tame singularities}, or an {\it $ \mathbb{R}$-$\mathbb{R}$ TS} function. For such a function $c_i$ with $S(c_i)$ being non-empty, we have a representation $S(c_i)={\sqcup}_{j=1}^l {I_{i,j}}$ with $l$ being a positive integer, or $S(c_i)={\sqcup}_{j=1}^{\infty} {I_{i,j}}$ or $S(c_i)={\sqcup}_{j=-\infty}^{\infty} {I_{i,j}}$ with $I_{i,j}$ being a closed and connected non-empty set as above. The set $c_i(I_{i,j})$ is a one-point set and let $c_{i,j} \in \mathbb{R}$ denote the unique element there. We present some of our main result.
 \setcounter{MainThm}{-1}
\begin{MainThm}
\label{mthm:0}
\begin{enumerate}
\item \label{mthm:0.1} {\rm (}Main Theorem \ref{mthm:1}{\rm )} For two $ \mathbb{R}$-$\mathbb{R}$ TS functions $c_1$ and $c_2$ with $c_1(S(c_1)) \bigcup c_2(S(c_2))$ being discrete and closed in $\mathbb{R}$, the Reeb space $R_{{\pi}_{2,1} \circ f_{c_1,c_2}}$ is regarded to be a $1$-dimensional CW complex.
\item \label{mthm:0.2} {\rm (}Main Theorem \ref{mthm:3}{\rm )} Let $c_{0}$ be an $ \mathbb{R}$-$\mathbb{R}$ TS function $c_{0}$ which is bounded and whose 1st derivative ${c_{0}}^{\prime}(x)$ satisfies $-\frac{1}{a_{c,{\rm M}}} \leq {c_{0}}^{\prime}(x) \leq a_{c,{\rm m}}$ for every $x \in \mathbb{R}$, for some two real numbers $a_{c,{\rm m}}, a_{c,{\rm M}}>0$ with $a_{c,{\rm m}}<a_{c,{\rm M}}$. Let $0<a_c<a_{c,{\rm M}}$ and we also assume 
 that the level set ${{c_{0}}^{\prime}}^{-1}(a_{c})$ is a non-empty set ${\sqcup}_{j=1}^l {I_{0,j}}$ with $l$ being a positive integer, ${\sqcup}_{j=1}^{\infty} {I_{0,j}}$, or ${\sqcup}_{j=-\infty}^{\infty} {I_{0,j}}$, with $I_{0,j}$ being a closed and connected non-empty set as above, and that the union of tangent lines of the graph $\{(c_0(x),x) \mid x \in \mathbb{R}\}$ at all points of the image ${c_0}({{c_{0}}^{\prime}}^{-1}(a_{c}))$ is the disjoint union of at most countably many tangent lines and also a closed set of ${\mathbb{R}}^2$. By a suitable rotation, we have another function $c_1$ and by $c_2:=c_1+a$ {\rm (}$a \in \mathbb{R}${\rm )}, we have a situation of \cite[Theorem 4]{kitazawa9}.
\end{enumerate}
\end{MainThm}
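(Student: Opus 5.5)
Main Theorem \ref{mthm:0} only collects Main Theorems \ref{mthm:1} and \ref{mthm:3}, so I will prove the two parts separately, following the self-contained construction of $f_{c_1,c_2}$ (locally special generic over $\overline{D_{c_1,c_2}}$). For part (\ref{mthm:0.1}) the plan is to describe the level sets of ${\pi}_{2,1} \circ f_{c_1,c_2}$ through the vertical slices $\overline{D_{c_1,c_2}} \cap {\pi}_{2,1}^{-1}(t)$. Here I read the graphs $\{(c_i(x),x)\}$ with $c_i(x)$ as the first coordinate, so the slice at level $t$ is $\{x \mid c_1(x)\le t\le c_2(x)\}$ or the analogous set. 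Over the interior of $D_{c_1,c_2}$ the map is a trivial $S^{m-1}$-bundle, and over the boundary curves the fibres collapse to points. Hence each contour of ${\pi}_{2,1} \circ f_{c_1,c_2}$ is the preimage of one connected component of a slice. This gives a natural identification of $R_{{\pi}_{2,1} \circ f_{c_1,c_2}}$ with the space of connected components of slices of $\overline{D_{c_1,c_2}}$.

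The key step is a local structure statement for $t$ away from $c_1(S(c_1)) \bigcup c_2(S(c_2))$. There both $c_i$ are regular at every preimage of $t$, so by the implicit function theorem the endpoints of the slice intervals move smoothly in $t$. The slice components therefore vary continuously and without merging or splitting on any open interval of regular values. Because the singular value set is discrete and closed, the complement is a union of open intervals. Over each such interval the components form a disjoint family of open arcs, and these are the $1$-cells.

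Next I take as $0$-cells the components of slices at the critical values. The $\mathbb{R}$-$\mathbb{R}$ TS hypothesis makes each $c_i^{-1}(t)$, for $t$ critical, a closed discrete union of points and intervals. Consequently each critical slice has at most countably many components, and the set of these components is discrete. I then check the closure-finiteness and weak-topology conditions. Because components are possibly unbounded, I expect this verification to be the main obstacle: a $1$-cell may fail to close up at an end. In that case I will allow ends, or attach the cell so that its closure adds only the vertices it actually limits to, using local compactness of $\overline{D_{c_1,c_2}}$ near bounded components. For unbounded components I plan to use properness of the slice structure on compact $t$-intervals to exclude accumulation of cells.

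For part (\ref{mthm:0.2}), I will rotate the plane by the angle $\theta$ whose tangent is the slope $a_c$. I will choose the rotation so that tangent lines of slope $a_c$ become parallel to the projection direction, that is, so the new curve's critical points are exactly the rotated points of ${c_0}'^{-1}(a_c)$. The bound $-\frac{1}{a_{c,{\rm M}}}\le c_0'\le a_{c,{\rm m}}$ together with $a_{c,{\rm m}}<a_{c,{\rm M}}$ ensures that no tangent becomes vertical after rotation. The rotated curve is therefore again the graph of a smooth function $c_1$, and boundedness of $c_0$ forces $c_1$ to diverge to the same infinity at both ends as a rotated line would. The TS structure of $c_1$ is inherited from ${c_0}'^{-1}(a_c)$. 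The assumption that the tangent lines form a closed, at most countable disjoint family makes $c_1(S(c_1))$ discrete and closed. Setting $c_2 = c_1 + a$ keeps the same asymptotics at each infinity, which is exactly the hypothesis of \cite[Theorem 4]{kitazawa9}.
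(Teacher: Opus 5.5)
\textbf{Part (2).} This is the paper's proof of Main Theorem \ref{mthm:3}. You rotate so that the tangents of slope $a_c$ become parallel to the $x_2$-axis, use the derivative bounds to keep the rotated curve a graph, get discrete and closed critical values from the tangent-line hypothesis, and get the asymptotics from boundedness of $c_0$. One slip: in the paper's convention the tangent direction to avoid is the one perpendicular to the $x_2$-axis, not a ``vertical'' one. What you need is $a_c\,{c_0}'(x)>-1$, which follows from $a_c\le a_{c,{\rm m}}<a_{c,{\rm M}}$, since $({c_0}')^{-1}(a_c)\neq\emptyset$.

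\textbf{Part (1).} Your reduction to components of vertical slices of $\overline{D_{c_1,c_2}}$ also matches the paper. After that you take a global route: cells over intervals of regular values, then the CW axioms. The paper's route is local: closed saturated neighborhoods $K_0(C_p)$, Hausdorffness, and local models $\mathbb{R}$ or a wedge of half-lines. The step you yourself call the main obstacle is not supplied, and the plan you sketch for it would fail, for two reasons.
\begin{itemize}
\item There is no ``properness of the slice structure on compact $t$-intervals.'' $\pi_{2,1}|_{\overline{D_{c_1,c_2}}}$ is not proper over $[t_1,t_2]$ as soon as a slice in that range is unbounded.
\item Accumulation of cells cannot be excluded, because it really occurs. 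In Example \ref{ex:1} (\ref{ex:1.1}) with $a>2$, the contour $\{1\}\times\mathbb{R}$ is the common limit of the infinitely many components of $\{x\mid \sin x\le t\}$ as $t\to 1^-$. So that vertex has countably many incident edges.
\end{itemize}
What actually has to be shown is that the quotient topology at such a vertex is the weak (CW) topology. This holds because a saturated open neighborhood of a non-compact contour may become thinner and thinner towards infinity, so each incident edge can be truncated at its own height. It also uses that each point of $\overline{D_{c_1,c_2}}$ has a neighborhood meeting the preimages of only finitely many cells.

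Two further points are also missing.
\begin{itemize}
\item Your phrase ``attach the cell so that its closure adds only the vertices it actually limits to'' presupposes that a family of contours cannot converge to two distinct contours, i.e.\ that the Reeb space is Hausdorff. This separation statement is the heart of the paper's argument. It is what the sets $K_0(C_p)$ provide, and what the conditions on $Z_{\rm F}$ in Main Theorem \ref{mthm:1} are there to ensure.
\item The implicit function theorem alone does not give ``no merging or splitting over intervals of regular values.'' Endpoints of slice components can escape to infinity at regular values, e.g.\ for $c_1=\arctan x$, $c_2=\arctan x+1$ at $t=\pi/2$. The missing argument is as follows. A gap between two components of a slice is a bounded component of $\{c_1>t\}$ or of $\{c_2<t\}$; these sets are disjoint since $c_1<c_2$. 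Its endpoints move monotonically in $t$, so the gaps are nested and cannot run off to infinity. Hence a gap can appear or disappear only at an interior local extremum, i.e.\ at a critical value.
\end{itemize}

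With these three ingredients (nested gaps, separation, and the weak-topology check), and with any edge having an open end subdivided into infinitely many cells, your decomposition would indeed exhibit the Reeb space as a $1$-dimensional CW complex.
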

Main Theorem \ref{mthm:0} is revisited in revised ways, and we present additional cases, prove our new result, and present future problems, in the next section.

\section{Our main result.}
\subsection{Main Theorems.}

Theorem \ref{thm:1} is essential in defining the important map $f_{c_1,c_2}$ onto the closure $\overline{D_{c_1,c_2}}$ of the region $D_{c_1,c_2}$ surrounded by the graph, where the closure is considered in ${\mathbb{R}}^2$. This is essentially due to \cite{kitazawa3}. As we do in the preprint \cite{kitazawa9}, we review again. Some introductory propositions and theorems are same as those in \cite{kitazawa9}.
\begin{Thm}
	\label{thm:1}
	Let $m \geq 2$ be an integer.
	Let $c_i:\mathbb{R} \rightarrow \mathbb{R}$ {\rm (}$i=1,2${\rm )} be smooth functions satisfying $c_1(x)<c_2(x)$ for any $x \in \mathbb{R}$.
	Let $D_{c_1,c_2}:=\{(x_1,x_2)\mid c_1(x_2)<x_1<c_2(x_2)\}$ and we use $\overline{D_{c_1,c_2}}$ for its closure in ${\mathbb{R}}^2$.
	Then $X_{m,c_1,c_2}:=\{(x_1,x_2,{(y_j)}_{j=1}^{m-1}) \in \overline{D_{c_1,c_2}} \times {\mathbb{R}}^{m-1} \subset {\mathbb{R}}^{m+1} \mid (x_1-c_1(x_2))(c_2(x_2)-x_1)-{\Sigma}_{j=1}^{m-1} {y_j}^2=0\}$ is an $m$-dimensional smooth submanifold with no boundary in ${\mathbb{R}}^{m+1}$, the restriction $f_{c_1,c_2}:={\pi}_{m+1,2} {\mid}_{X_{m,c_1,c_2}}$ is a smooth surjection onto $\overline{D_{c_1,c_2}}$, and $S({\pi}_{m+1,2} {\mid}_{X_{m,c_1,c_2}})=\{(x_1,x_2,{(y_j)}_{j=1}^{m-1}) \mid (x_1,x_2) \in \overline{D_{c_1,c_2}}-D_{c_1,c_2}\}=\{(x_1,x_2,{(0)}_{j=1}^{m-1}) \mid (x_1,x_2) \in \overline{D_{c_1,c_2}}-D_{c_1,c_2}\}$.
	\end{Thm}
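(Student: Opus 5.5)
Consider the smooth function $F:{\mathbb{R}}^{m+1} \rightarrow \mathbb{R}$ given by
\[
F(x_1,x_2,{(y_j)}_{j=1}^{m-1}):=(x_1-c_1(x_2))(c_2(x_2)-x_1)-{\Sigma}_{j=1}^{m-1} {y_j}^2 .
\]
The plan is to realize $X_{m,c_1,c_2}$ as the zero set $F^{-1}(0)$, show that $0$ is a regular value, and then read off the singular set of the projection from the gradient of $F$.

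First we check that the restriction to $\overline{D_{c_1,c_2}} \times {\mathbb{R}}^{m-1}$ in the definition is harmless. Since $c_1<c_2$, the closure $\overline{D_{c_1,c_2}}$ is exactly $\{c_1(x_2) \leq x_1 \leq c_2(x_2)\}$. If $F=0$, then $(x_1-c_1(x_2))(c_2(x_2)-x_1)={\Sigma} {y_j}^2 \geq 0$. Because $c_1(x_2)<c_2(x_2)$, this forces $c_1(x_2) \leq x_1 \leq c_2(x_2)$: outside that interval the two factors have opposite signs. Hence $X_{m,c_1,c_2}=F^{-1}(0)$, a closed subset of ${\mathbb{R}}^{m+1}$.

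Next we compute the gradient. We have $\partial F/\partial y_j=-2y_j$ and $\partial F/\partial x_1=c_2(x_2)+c_1(x_2)-2x_1$.
\begin{itemize}
\item At a zero of $F$ with some $y_j \neq 0$, the gradient is nonzero.
\item If all $y_j=0$, then $(x_1-c_1)(c_2-x_1)=0$, so $x_1=c_1(x_2)$ or $x_1=c_2(x_2)$. In either case $\partial F/\partial x_1=\pm(c_2(x_2)-c_1(x_2)) \neq 0$.
\end{itemize}
So $0$ is a regular value, and the implicit function theorem makes $X_{m,c_1,c_2}$ an $m$-dimensional smooth closed submanifold with no boundary. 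The projection $f_{c_1,c_2}$ is smooth as a restriction of a linear map. For surjectivity, take $(x_1,x_2) \in \overline{D_{c_1,c_2}}$; the product $(x_1-c_1(x_2))(c_2(x_2)-x_1)$ is $\geq 0$, so we may choose $y_1$ equal to its square root and the other $y_j=0$.

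Finally we identify the singular set. At $p \in X_{m,c_1,c_2}$ we have $T_pX=\ker dF_p$, and $d({\pi}_{m+1,2})_p$ restricted to $T_pX$ has rank $2$ exactly when $\ker dF_p$ together with $\ker d{\pi}_{m+1,2}=\{0\}\times{\mathbb{R}}^{m-1}$ spans ${\mathbb{R}}^{m+1}$. Equivalently, $dF_p$ must not vanish on the $y$-directions, that is, $(y_j)_j \neq 0$. Thus the rank drops precisely when all $y_j=0$, which for points of $X$ means $x_1 \in \{c_1(x_2),c_2(x_2)\}$, i.e. $(x_1,x_2) \in \overline{D_{c_1,c_2}}-D_{c_1,c_2}$.

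Conversely, if $(x_1,x_2)$ lies on the boundary, the $F=0$ condition forces $y=0$. This gives both descriptions of $S(f_{c_1,c_2})$ in the statement. The only mildly delicate step is the linear-algebra translation of the rank condition into the nonvanishing of the $y$-part of $dF_p$; it is routine.
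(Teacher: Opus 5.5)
Your proposal is correct and follows essentially the same route as the paper: you realize $X_{m,c_1,c_2}$ as the zero set of $F$ and check that $\partial F/\partial y_j\neq 0$ off the boundary while $\partial F/\partial x_1=\pm(c_2(x_2)-c_1(x_2))\neq 0$ on it. You also supply details the paper's proof leaves implicit. These are that $F^{-1}(0)$ automatically lies over $\overline{D_{c_1,c_2}}$, that $f_{c_1,c_2}$ is surjective, and that the singular set is identified by asking whether the hyperplane $\ker dF_p$ contains $\{0\}\times\mathbb{R}^{m-1}$.
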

\begin{proof}
	We prove this in a self-contained way. A main ingredient is implicit function theorem and by this we show that the subset of ${\mathbb{R}}^{m+1}$ is a smooth manifold with no boundary. 

First, this subset $X_{m,c_1,c_2}$ is the zero set of the smooth function $(x_1-c_1(x_2))(c_2(x_2)-x_1)-{\Sigma}_{j=1}^{m-1} {y_j}^2$, by the definition.
	
	In the case $(x_1,x_2) \in D_{c_1,c_2}$ in this zero set, at the point, the value of the partial derivative of $(x_1-c_1(x_2))(c_2(x_2)-x_1)-{\Sigma}_{j=1}^{m-1} {y_j}^2$ by some $y_j$ is not $0$.
	
	In the case $(x_1,x_2) \in \overline{D_{c_1,c_2}}-D_{c_1,c_2}$ in this zero set, at the point, the partial derivative of $(x_1-c_1(x_2))(c_2(x_2)-x_1)-{\Sigma}_{j=1}^{m-1} {y_j}^2$ by $x_1$ is $c_1(x_2)-x_1$ or $c_2(x_2)-x_1$, which is not $0$.

This completes the proof.
	\end{proof}
Hereafter, we abuse the notation from Theorem \ref{thm:1}.
Theorem \ref{thm:2} (\ref{thm:2.1}) is also presented in \cite{kitazawa9}. We can easily check Theorem \ref{thm:2} (\ref{thm:2.2}) as an easy exercise on theory of {\it Morse} functions. 
We only explain that a smooth-real function is {\it Morse} if the {\it Hessian} at each critical point of the function, the symmetric matrix defined by considering the values of 2nd partial derivatives of the function, is non-degenerate, for some local coordinate in the manifold of the domain: this property does not depend on the local coordinate of the smooth manifold. Most of explicit functions in the present paper seem to be Morse, where we omit explicit calculations in most of our explicit cases. In general, it may be difficult to calculate derivatives of the functions to investigate whether the functions are Morse or not.  
We also omit precise exposition on Morse functions and fundamental arguments on {\it Morse}({\it-Bott}) functions. For this, see \cite{milnor1, milnor2} and for related singularity theory, see \cite{golubitskyguillemin}.
\begin{Thm}
\label{thm:2}
\begin{enumerate}
\item \label{thm:2.1} A point $p \in X_{m,c_1,c_2}$ is a critical point of ${\pi}_{2,1} \circ f_{c_1,c_2}={\pi}_{m+1,1} {\mid}_{X_{m,c_1,c_2}}$ if and only if $p$ is in $S({\pi}_{m+1,2} {\mid}_{X_{m,c_1,c_2}})=\{(x_1,x_2,{(0)}_{j=1}^{m-1}) \mid (x_1,x_2) \in \overline{D_{c_1,c_2}}-D_{c_1,c_2}\}$ and of the form $(c_i(x),x,{(0)}_{j=1}^{m-1})$ {\rm (}$x \in S(c_i)${\rm )}.
\item \label{thm:2.2} If $c_1$ and $c_2$ are Morse functions, then the function ${\pi}_{m+1,1} {\mid}_{X_{m,c_1,c_2}}$ is Morse.
\end{enumerate}

\end{Thm}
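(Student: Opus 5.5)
Set $F(x_1,x_2,y):=(x_1-c_1(x_2))(c_2(x_2)-x_1)-\Sigma_{j=1}^{m-1}{y_j}^2$, so that $X_{m,c_1,c_2}=F^{-1}(0)\cap(\overline{D_{c_1,c_2}}\times{\mathbb{R}}^{m-1})$ and $0$ is a regular value by Theorem \ref{thm:1}. The plan is to use the Lagrange criterion: $p\in X_{m,c_1,c_2}$ is a critical point of ${\pi}_{m+1,1}{\mid}_{X_{m,c_1,c_2}}$ exactly when $dx_1$ vanishes on $T_pX_{m,c_1,c_2}=\ker dF_p$, i.e. when $\nabla F(p)$ is parallel to $(1,0,\ldots,0)$. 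Thus I would compute
\[
\partial_{x_1}F=c_1(x_2)+c_2(x_2)-2x_1,\qquad \partial_{x_2}F=-c_1'(x_2)(c_2(x_2)-x_1)+(x_1-c_1(x_2))c_2'(x_2),\qquad \partial_{y_j}F=-2y_j,
\]
and impose $\partial_{x_2}F=\partial_{y_j}F=0$.

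For (\ref{thm:2.1}), the conditions $y_j=0$ for all $j$ and $F(p)=0$ force $(x_1-c_1(x_2))(c_2(x_2)-x_1)=0$, so $x_1=c_i(x_2)$ for some $i$. In particular $p\in S({\pi}_{m+1,2}{\mid}_{X_{m,c_1,c_2}})$ by Theorem \ref{thm:1}. Substituting, say, $x_1=c_1(x_2)$ into $\partial_{x_2}F$ gives $-c_1'(x_2)(c_2(x_2)-c_1(x_2))=0$. Since $c_1<c_2$, this is equivalent to $c_1'(x_2)=0$, i.e. $x_2\in S(c_1)$; the case $x_1=c_2(x_2)$ is symmetric. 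Conversely, every point of this form satisfies the gradient condition, because $\partial_{x_1}F=\pm(c_2-c_1)\neq0$ there. This gives the equivalence.

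For (\ref{thm:2.2}), I would work at a critical point $p=(c_1(a),a,0)$ with $c_1'(a)=0$ and $c_1''(a)\neq 0$. Since $\partial_{x_1}F\neq0$ at $p$, the implicit function theorem lets me use $(x_2,y)$ as local coordinates on $X_{m,c_1,c_2}$, writing $x_1=g(x_2,y)$ near $p$. Concretely, solving $(x_1-c_1)(c_2-x_1)=|y|^2$ for the root near $c_1$ gives
\[
x_1=c_1(x_2)+\frac{|y|^2}{c_2(x_2)-c_1(x_2)}+O(|y|^4)
\]
to second order. The function in these coordinates is therefore $g(x_2,y)\approx c_1(x_2)+|y|^2/(c_2(a)-c_1(a))$. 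Its Hessian at $(a,0)$ is diagonal with entries $c_1''(a)$ and $2/(c_2(a)-c_1(a))>0$, hence non-degenerate. The point $(c_2(a),a,0)$ is handled the same way with the root near $c_2$, where the sign of the $|y|^2$ term flips.

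The only delicate step is justifying this second-order expansion of the implicit function, in particular that cross terms $x_2y_j$ vanish to this order. This follows by differentiating $F(g,x_2,y)=0$ twice at $(a,0)$, using $y=0$, $c_1'(a)=0$, and the evenness of $F$ in $y$, which makes $g$ even in $y$.
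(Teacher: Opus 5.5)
Your proof is correct, and it is more complete than what the paper offers. The paper gives no argument for this theorem: it defers part (1) to \cite{kitazawa9} and calls part (2) an easy exercise on Morse functions.

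Your route fills this in naturally. You apply the Lagrange criterion on the regular level set $X_{m,c_1,c_2}=F^{-1}(0)$; note that the equation itself already forces $c_1(x_2)\le x_1\le c_2(x_2)$. You then use the implicit-function chart $(x_2,y)$ at the two boundary sheets. This relies on the same nonvanishing of $\partial_{x_1}F$ that drives the paper's proof of Theorem~\ref{thm:1}.

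You can avoid the step you flag as delicate by solving the quadratic exactly. With $d:=c_2-c_1>0$, the root near $c_1$ is
\[
x_1=c_1(x_2)+|y|^2h(x_2,y),\qquad h(x_2,y)=\frac{2}{d(x_2)+\sqrt{d(x_2)^2-4|y|^2}},
\]
where $h$ is smooth near $(a,0)$ with $h(a,0)=1/d(a)$. The root near $c_2$ is $c_2(x_2)-|y|^2h(x_2,y)$. The Hessians $\mathrm{diag}(c_1''(a),2/d(a),\ldots,2/d(a))$ and $\mathrm{diag}(c_2''(a),-2/d(a),\ldots,-2/d(a))$ can then be read off directly, with no evenness or second-order expansion argument.

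This computation also records the Morse indices, which the paper leaves implicit. A critical point on the lower sheet has the index of $c_1$ at $a$. A critical point on the upper sheet has the index of $c_2$ at $a$ plus $m-1$.
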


Main Theorem \ref{mthm:1} is a slightly extended version of Main Theorem \ref{mthm:0} (\ref{mthm:0.1}).
\begin{MainThm}
\label{mthm:1}
 For two $\mathbb{R}$-$\mathbb{R}$ TS functions $c_1$ and $c_2$ such that the set $c_1(S(c_1)) \bigcup c_2(S(c_2))$ is a discrete and closed set of $\mathbb{R}-Z_{\rm F}$ with a suitably chosen discrete and closed subset $Z_{\rm F}$ of $\mathbb{R}$ and with each $p \in Z_{\rm F}$ satisfying the following conditions, the Reeb space $R_{{\pi}_{2,1} \circ f_{c_1,c_2}}=R_{{\pi}_{m+1,1} {\mid}_{X_{m,c_1,c_2}}}$ is regarded to be a $1$-dimensional CW complex.
\begin{itemize}
\item For each $p \in Z_{\rm F}$ and each contour $C_p$ in each level set ${{\pi}_{2,1}}^{-1}(p) \bigcap \overline{D_{c_1,c_2}}$ of the function ${\pi}_{2,1} {\mid}_{\overline{D_{c_1,c_2}}}$, we have a small connected neighborhood $K(C_p)$ of $C_p$ in $\overline{D_{c_1,c_2}}$ represented as the disjoint union of some contours of ${\pi}_{2,1} {\mid}_{\overline{D_{c_1,c_2}}}$ {\rm (}a {\rm saturated neighborhood} in \cite{gelbukh1, saeki3}{\rm )}.
\item The set $K(C_p)$ is a closed subset of $\overline{D_{c_1,c_2}}$.
\item $K(C_p)-C_p$ does not contain any point of the form $(c_i(x),x)$ {\rm (}$x \in S(c_i)${\rm )}.
\end{itemize}
\end{MainThm}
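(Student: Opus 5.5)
Writing $F:={\pi}_{m+1,1} {\mid}_{X_{m,c_1,c_2}}$, the plan is to reduce the Reeb space of $F$ to the Reeb space of the planar function ${\pi}_{2,1} {\mid}_{\overline{D_{c_1,c_2}}}$, and then build a cell structure on the latter by cutting along the discrete closed set of special values. For the reduction, recall from Theorem \ref{thm:1} that the fibre of $f_{c_1,c_2}$ over a point of $D_{c_1,c_2}$ is the sphere $\{{\Sigma}_j {y_j}^2=r\}$ with $r>0$, which is a copy of $S^{m-2}$ and is connected since $m \geq 2$ (for $m=2$ it is two points, and then separate care is needed; I would treat $m \geq 3$ first and handle $m=2$ by noting that the two points are joined within the contour through the boundary points over $\overline{D_{c_1,c_2}}-D_{c_1,c_2}$). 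Over a boundary point the fibre is a single point. Hence each contour of $F$ is $f_{c_1,c_2}^{-1}(C)$ for a contour $C$ of ${\pi}_{2,1} {\mid}_{\overline{D_{c_1,c_2}}}$, because the preimage of a connected set under this map, which has connected fibres and is closed (being proper onto its image), is connected. This gives a homeomorphism $R_F \cong R_{{\pi}_{2,1} {\mid}_{\overline{D_{c_1,c_2}}}}$. To obtain it I would check that $f_{c_1,c_2}$ is a quotient map, which holds because it is proper: its fibres over a compact set are bounded by $(x_1-c_1)(c_2-x_1)$.

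Next I would study the contours of $g:={\pi}_{2,1} {\mid}_{\overline{D_{c_1,c_2}}}$. The level set $g^{-1}(t)=\{x_2 \mid c_1(x_2) \leq t \leq c_2(x_2)\}$ is a closed subset of the vertical line $x_1=t$, and its contours are closed intervals, possibly unbounded. Let $V:=c_1(S(c_1)) \cup c_2(S(c_2)) \cup Z_{\rm F}$, which is discrete and closed in $\mathbb{R}$. Over each open interval $J$ of $\mathbb{R}-V$, the functions $c_1$ and $c_2$ have no critical values in $J$. So the endpoints of the intervals making up $g^{-1}(t)$ are points where $c_i(x_2)=t$ with $c_i'(x_2) \neq 0$, and these vary smoothly in $t$ by the implicit function theorem. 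I would conclude that $g^{-1}(J)$ is a disjoint union of components of the form $(\text{interval}) \times J$ up to homeomorphism, each giving an open edge of $R_g$ mapped homeomorphically onto $J$ by $\bar{g}$. The main obstacle is that non-properness allows a component to escape to infinity or to merge at ends. I would handle this by showing that the number of intervals stays locally constant in $t \in J$: the set of points with $c_i(x_2)=t$ is discrete, and transversality prevents components from appearing except at critical values.

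The contours over $t \in V$ become the $0$-cells. For $t \in c_1(S(c_1)) \cup c_2(S(c_2))$ away from $Z_{\rm F}$, I would use the tame structure of $S(c_i)$, namely that it consists of points and closed arcs, to show that each critical contour has a saturated neighbourhood meeting only finitely many edges locally. For $p \in Z_{\rm F}$, the three bulleted hypotheses supply exactly this: $K(C_p)$ is a closed saturated neighbourhood, and $K(C_p)-C_p$ contains no critical point, so $K(C_p)-C_p$ is a union of pieces of the edges described above. Each contour over $V$ then defines a vertex. Attaching the closures of the edges via their limiting contours, which exist by the closedness of $K(C_p)$, gives a CW structure. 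I expect the hardest step to be verifying that the quotient topology of $R_g$ agrees with this weak topology, that is, that edges attach to genuine limit vertices rather than accumulating. For this I would first try using the saturated neighbourhoods $q_g(K(C_p))$ as basic open neighbourhoods of vertices, together with the local finiteness coming from the discreteness of $V$.
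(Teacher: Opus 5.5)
Your reduction from $F={\pi}_{m+1,1}{\mid}_{X_{m,c_1,c_2}}$ to $g={\pi}_{2,1}{\mid}_{\overline{D_{c_1,c_2}}}$, via properness and connected fibres, is fine for $m\geq 3$; the paper makes this identification tacitly. The global ``cut along $V$'' construction that follows, however, breaks down at three points.

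\textbf{$V$ is not discrete.} Here $V=c_1(S(c_1))\cup c_2(S(c_2))\cup Z_{\rm F}$. The hypothesis only makes the critical values discrete and closed in $\mathbb{R}-Z_{\rm F}$, and allowing them to accumulate at points of $Z_{\rm F}$ is exactly what $Z_{\rm F}$ is for. In Example 4, the critical values of $c_1(x)=e^{-x^2}\sin x$ accumulate at $0\in Z_{\rm F}$. Meanwhile the level set over $0$ consists of the regular contours $\{0\}\times[(2k-1)\pi,2k\pi]$. If every contour over $V$ is a $0$-cell, infinitely many $0$-cells lie on the edges through each of these contours and accumulate at it, so the $0$-skeleton is not discrete.

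\textbf{Contours can escape to infinity at regular values.} Over a component $J$ of $\mathbb{R}-V$ the number of contours need not be locally constant. Take $c_1\equiv -10$ and $c_2(x)=\frac{2x^2}{1+x^2}$, so $V=\{-10,0\}$. The two half-line contours $\{t\}\times\{x_2 : |x_2|\geq\sqrt{t/(2-t)}\}$ exist exactly for $0<t<2$ and vanish at the regular value $2$. Transversality cannot prevent this, and such edges neither map onto $J$ nor have a limiting contour at their upper end.

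\textbf{Local finiteness fails.} One critical contour can contain infinitely many critical points. In Example 1 (1) with $a>2$, the contour $\{1\}\times\mathbb{R}$ passes through every maximum of $\sin$. This gives a vertex of countably infinite degree, and the paper's local model explicitly allows countably many half-lines at a point.

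\textbf{What is missing: a local construction.} The paper builds the structure locally around each contour in the Reeb space, not level-interval by level-interval. For each contour $C$ it takes a closed, connected, saturated neighbourhood $K_0(C)$ inside a thin strip of levels, containing no critical point outside $C$. At $Z_{\rm F}$ this comes from the bulleted hypotheses; elsewhere it comes from discreteness of the critical values.
\begin{itemize}
\item These neighbourhoods separate distinct contours, which gives Hausdorffness.
\item Each component of $K_0(C)-C$ maps onto a half-open interval whose closure meets $C$. So the Reeb space is locally an open arc or a wedge of finitely or countably many half-lines.
\item Regular contours lying over critical or $Z_{\rm F}$ levels then become interior points of edges, not vertices.
\end{itemize}
At infinite-degree vertices you still have to check that the quotient topology is the weak one. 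Under the hypotheses, infinitely many incident edges can only occur at a non-compact contour, whose saturated open neighbourhoods can shrink independently along each edge.

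\textbf{The case $m=2$.} Your handling of $m=2$ is incorrect. A contour of $g$ that is a whole line inside $D_{c_1,c_2}$ has two disjoint lines as its preimage. For example, with $c_1\equiv -1$ and $c_2\equiv 1$, $X_{2,c_1,c_2}$ is the cylinder $\{x_1^2+y_1^2=1\}$, whose Reeb space is a circle, while $R_g\cong[-1,1]$. The conclusion of the theorem still holds, but the identification must be replaced by a two-to-one analysis over such edges. The paper glosses over this point too.
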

\begin{proof}
This is essentially same as \cite[Theorem 2 (2)]{kitazawa9}. As the proof of the presented theorem, we use methods from \cite{saeki2, saeki3} and \cite{gelbukh1}, for example, although we do not need to understand such general cases. We may refer to the preprint \cite{kitazawa7}, arguments in which are also respected, and we do not assume related knowledge.

We use $\mathbb{R}$-$\mathbb{R}$ TS functions $c_1$ and $c_2$. From this, for any $p \in {\pi}_{2,1}(\overline{D_{c_1,c_2}})$, each contour $C_p$ in each level set ${{\pi}_{2,1}}^{-1}(p) \bigcap \overline{D_{c_1,c_2}}$ of the function ${\pi}_{2,1} {\mid}_{\overline{D_{c_1,c_2}}}$ is represented as a component of the intersection of the line $\{(p,t) \mid t \in \mathbb{R}\}$ and $\overline{D_{c_1,c_2}}$. This is homeomorphic to $D^1$, $\mathbb{R}$, or $\{t>0\} \subset \mathbb{R}$ and closed in $\overline{D_{c_1,c_2}}$.

For $p \in Z_{\rm F}$, we choose a pair of real numbers $p_{C_p,1}<p_{C_p,2}$ with $p_{C_p,1}<p<p_{C_p,2}$ and the difference $p_{C_p,2}-p_{C_p,1}$ being sufficiently small and choose a connected component $K_0(C_p)$ of $K(C_p) \bigcap \{(p,t) \mid p_{C_p,1} \leq p \leq p_{C_p,2}, t \in \mathbb{R}\}$ containing $C_p$. This is also closed in $\overline{D_{c_1,c_2}}$. By the assumption on the TS functions $c_1$ and $c_2$ and their critical sets, these closed and connected sets $K_0(C_p)$ in $\overline{D_{c_1,c_2}}$ can be also chosen in such a way that each $K_0(C_{p_1})$ chosen for each point $p_1 \in Z_{\rm F}$ does not contain any point of the form $(c_i(x),x,{(0)}_{j=1}^{m-1}) \in K_0(C_{p_1})-C_{p_1}$ {\rm (}$x \in S(c_i)${\rm )} (, as in the assumption,) or any connected component $C_{p_2}$ of this type chosen for a point $p_2 \in Z_{\rm F}$, as a subset other than $C_{p_1}$. This means that the two distinct contours $C_{p_1}$ and $C_{p_2}$ of the function ${\pi}_{m+1,1} {\mid}_{X_{m,c_1,c_2}}$ are separated by suitably chosen neighborhoods in $\overline{D_{c_1,c_2}}$. 

The set $c_1(S(c_1)) \bigcup c_2(S(c_2))$ is assumed to be discrete and closed in $\mathbb{R}-Z_{\rm F}$ for some discrete and closed subset $Z_{\rm F}$ of $\mathbb{R}$.
Due to this assumption, the condition on the existence of a similar connected neighborhood $K(C_{0,p})$ of  $C_{0,p}$ and another similar connected set $K_0(C_{0,p})$ which are also closed subsets of  $\overline{D_{c_1,c_2}}$ is also satisfied, for each point $p \in {\pi}_{2,1}(\overline{D_{c_1,c_2}}) \bigcap  (c_1(S(c_1)) \bigcup c_2(S(c_2)) \bigcup (({\mathbb{R}}-{Z_{\rm F}}) \bigcap (\mathbb{R}-(c_1(S(c_1)) \bigcup c_2(S(c_2))))) \bigcup Z_{\rm F})$ and every contour $C_{0,p}$ of every level set ${{\pi}_{2,1}}^{-1}(p) \bigcap \overline{D_{c_1,c_2}}$ of the function ${\pi}_{2,1} {\mid}_{\overline{D_{c_1,c_2}}}$. We can see that two contours $C_{p_1}$ and $C_{p_2}$ of the function ${\pi}_{m+1,1} {\mid}_{X_{m,c_1,c_2}}$ are always separated by suitably chosen neighborhoods in $\overline{D_{c_1,c_2}}$. This guarantees that the Reeb space $R_{{\pi}_{2,1} \circ f_{c_1,c_2}}=R_{{\pi}_{m+1,1} {\mid}_{X_{m,c_1,c_2}}}$ is a Hausdorff space.

For each connected component $C_{K_0(C_{0,p})}$ of $K_0(C_{0,p})-C_{0,p}$, take its closure in $K_0(C_{0,p})$.

 By the previous argument, $C_{K_0(C_{0,p})}$ does not contain any point of the form $(c_i(x),x,{(0)}_{j=1}^{m-1})$ {\rm (}$x \in S(c_i)${\rm )} and $K_0(C_{0,p})$ contains exactly one contour $C_{0,p}$ of ${\pi}_{2,1} {\mid}_{\overline{D_{c_1,c_2}}}$ in the level set ${{\pi}_{2,1}}^{-1}(p)$. 

By the representations of the manifolds, the maps and the sets, together with the uniqueness of a contour of  ${\pi}_{2,1} {\mid}_{\overline{D_{c_1,c_2}}}$ in the level set ${{\pi}_{2,1}}^{-1}(p) \bigcap \overline{D_{c_1,c_2}} ={{\pi}_{2,1}}^{-1}(p) \bigcap K_0(C_p)$, $C_{K_0(C_{0,p})}$ is represented as a connected component
 of a set of the form $\overline{D_{c_1,c_2}} \bigcap \{(p^{\prime},t) \mid p_{C_p,0,1} \leq p^{\prime} \leq p_{C_p,0,2}, t \in \mathbb{R}\}$ with some real numbers $p_{C_p,0,1}<p_{C_p,0,2}$ or either $\overline{D_{c_1,c_2}} \bigcap \{(p^{\prime},t) \mid p_{C_p,0} \leq p^{\prime}<p, t \in \mathbb{R}\}$ or $\overline{D_{c_1,c_2}} \bigcap \{(p^{\prime},t) \mid p \leq p^{\prime} \leq p_{C_p,0} , t \in \mathbb{R}\}$ with some real number $p_{C_p,0}$. By the non-existence of points of the form $(c_i(x),x,{(0)}_{j=1}^{m-1})$ {\rm (}$x \in S(c_i) \bigcap K_0(C_p)-C_p${\rm )}, $C_{K_0(C_{0,p})}$ is, by the quotient map $q_{{\pi}_{2,1} {\mid}_{\overline{D_{c_1,c_2}}}}$ mapped to the set homeomorphic to a $1$-dimensional smooth manifold of the form $\{p^{\prime} \mid p_{C_p,0,1} \leq p^{\prime} \leq p_{C_p,0,2}\}$, or either $\{p^{\prime} \mid p_{C_p,0} \leq p^{\prime}<p\}$ or $\{p^{\prime} \mid p<p^{\prime} \leq p_{C_p,0}\}$.
Suppose that the intersection $\overline{C_{K_0(C_{p})}}^{K_0(C_p) \bigcap C_p}$ of the closure $\overline{C_{K_0(C_{p})}}^{K_0(C_p)}$ of $C_{K_0(C_{p})}$  in $K_0(C_p)$ is empty and we prove that this yields a contradiction. By the representations of the manifolds, the maps and the sets, together with the uniqueness of a contour of  ${\pi}_{2,1} {\mid}_{\overline{D_{c_1,c_2}}}$ in the level set ${{\pi}_{2,1}}^{-1}(p) \bigcap \overline{D_{c_1,c_2}} ={{\pi}_{2,1}}^{-1}(p) \bigcap K_0(C_p)$, the closure $\overline{C_{K_0(C_{p})}}^{K_0(C_p)}$ must not have any point mapped to $p$ by ${\pi}_{2,1}$. 
We can see that $\overline{C_{K_0(C_{p})}}^{K_0(C_p)}$ must be a non-empty connected component of $K_0(C_p)$ and that $C_p$ is contained in its another connected component. This is a contradiction. 

The Reeb space $R_{{\pi}_{m+1,1} {\mid}_{X_{m,c_1,c_2}}}$  is Hausdorff and locally connected. By our situation, at each point $p_{C} \in R_{{\pi}_{m+1,1} {\mid}_{X_{m,c_1,c_2}}}$ the Reeb space is locally homeomorphic to $\mathbb{R}$ or a space obtained by identifying points $0 \in \mathbb{R}$ in finitely many or countably many disjoint $1$-dimensional manifolds represented in the form $\{x \geq 0 \mid x \in \mathbb{R}\}$. Note that in the latter case, $p_C$ is identified with the points $0$.

This completes the proof.
\end{proof}
\begin{Ex}
\label{ex:1}
\begin{enumerate}
\item \label{ex:1.1} Consider $c_1(x):=\sin x$ and $c_2(x):=c_1(x)+a$ with $a>0$. The case $a>2$ gives an important case for Main Theorem \ref{mthm:1} with $Z_{\rm F}$ being empty. 
The 1st derivative is calculated as ${c_1}^{\prime}(x)=\cos x$ and the 2nd derivative is calculated as ${c_1}^{\prime \prime}(x)=-\sin x$ for $c_1$.
This is also a case for Theorem \ref{thm:2} (\ref{thm:2.2}). 
\item \label{ex:1.2} \cite[Example 1(2)]{kitazawa9} is also an important case for Main Theorem \ref{mthm:1} with $Z_{\rm F}=\{0\}$.
\end{enumerate}
\end{Ex}
\begin{Rem}
\label{rem:1}
In Theorem \ref{thm:1}, $X_{m,c_1,c_2}$ is diffeomorphic to $\mathbb{R} \times S^{m-1}$. Let $c_1$ be a constant function whose values are always $-1$ and let $a=2$ in Example \ref{ex:1} (\ref{ex:1.1}). This case yields a most natural geometric realization of $\mathbb{R} \times S^{m-1} \subset {\mathbb{R}}^{m+1}$.   
\end{Rem}
\begin{MainThm}
\label{mthm:2}
Let $c_1:\mathbb{R} \rightarrow \mathbb{R}$ be an $\mathbb{R}$-$\mathbb{R}$ TS function with $S(c_1)$ being non-empty and suppose that the absolute value $|c_{1,j}-c_{1,j+1}|$ is always greater than a positive number $N_{{\rm P},c_1,c_2}$ {\rm (}we consider all three cases $S(c_1)={\sqcup}_{j=1}^{l} I_j$, $S(c_1)={\sqcup}_{j=1}^{\infty} I_j$, and $S(c_1)={\sqcup}_{j=-\infty}^{\infty} I_j$ {\rm )}. 
Let $c_2(x):=c_1(x)+a$ with $0<a<N_{{\rm P},c_1,c_2}$. Then the Reeb space $R_{{\pi}_{m+1,1} {\mid}_{X_{m,c_1,c_2}}}$ is the Reeb graph whose vertices are of degree $1$, $2$, or $3$. This is also the Reeb digraph and its vertex where the function $\bar{{\pi}_{m+1,1} {\mid}_{X_{m,c_1,c_2}}}$ has a local extremum must be of degree $1$. 
\end{MainThm}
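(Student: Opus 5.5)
The plan is to reduce everything to the planar function ${\pi}_{2,1} {\mid}_{\overline{D_{c_1,c_2}}}$ and then read off the local structure of its Reeb space from the one-variable function $c_1$. The fibres of $f_{c_1,c_2}$ are points on $\overline{D_{c_1,c_2}}-D_{c_1,c_2}$ and spheres $S^{m-2}$ over $D_{c_1,c_2}$ (connected for $m \geq 3$; for $m=2$ two points glued along the boundary, so each contour preimage is still connected). Hence the contours of ${\pi}_{m+1,1} {\mid}_{X_{m,c_1,c_2}}$ are exactly the preimages of contours of ${\pi}_{2,1} {\mid}_{\overline{D_{c_1,c_2}}}$, and the two Reeb spaces are canonically homeomorphic.

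Since $c_2=c_1+a$, a contour over $x_1=p$ is a connected component $J$ of $\{x_2 \mid p-a \leq c_1(x_2) \leq p\}$, placed on the line $\{(p,t)\}$. By Theorem \ref{thm:2} (\ref{thm:2.1}), the critical contours are those meeting $(c_1(x),x)$ with $x \in S(c_1)$ (value $p=c_{1,j}$) or $(c_2(x),x)$ (value $p=c_{1,j}+a$).

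The key combinatorial step is that every contour contains at most one critical interval $I_j$, and only at an endpoint of $J$. Suppose $J$ contained $I_j$ and $I_{j+1}$. Then $c_1$ restricted to $J$ would take both values $c_{1,j}$ and $c_{1,j+1}$, which lie in $[p-a,p]$. This gives $|c_{1,j}-c_{1,j+1}| \leq a<N_{{\rm P},c_1,c_2}$, a contradiction. A similar argument, using monotonicity of $c_1$ between consecutive critical intervals, gives a uniform neighbourhood. For each contour $C_p$ there is a saturated closed connected neighbourhood $K(C_p)$, obtained by thickening $p$ to $[p-\varepsilon,p+\varepsilon]$, with no critical point in $K(C_p)-C_p$. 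So the hypotheses of Main Theorem \ref{mthm:1} hold, with $Z_{\rm F}$ possibly containing accumulation points of critical values, and the Reeb space is a $1$-dimensional CW complex. Local finiteness and the bound on degrees then come from the local models.

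I then classify the local models at a critical contour $C_p \ni I_j$ by the type of $I_j$.
\begin{itemize}
\item If $c_1$ is monotone across $I_j$ (an inflection-type critical set), then $J$ just deforms continuously on both sides of $p$. This gives a vertex of degree $2$ with one incoming and one outgoing edge.
\item If $c_1$ has a local maximum at $I_j$ and $p=c_{1,j}+a$, so that $I_j$ touches the graph of $c_2$, then for $p'>p$ the component near $I_j$ disappears. The vertex has degree $1$ and is a local maximum of $\bar{{\pi}_{m+1,1} {\mid}_{X_{m,c_1,c_2}}}$. Symmetrically, a local minimum with $p=c_{1,j}$ gives a degree-$1$ local minimum.
\item If $c_1$ has a local maximum at $I_j$ and $p=c_{1,j}$, then for $p'$ slightly less than $p$ the set $\{c_1 \leq p'\}$ splits $J$ near $I_j$ into two components, while for $p'>p$ it stays one component. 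This gives degree $3$, with edges on both sides, so the vertex is not an extremum. Symmetrically, a local minimum with $p=c_{1,j}+a$ also gives degree $3$.
\end{itemize}
Because $C_p$ contains only one $I_j$, no other merging occurs, so degrees are $1$, $2$ or $3$, and degree-$1$ vertices are exactly the local extrema. The digraph structure follows from the orientation rule by $\bar{c}$. Non-compact components $J$ give edges that are ends, which matches ``graph with ends''.

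The main obstacle is the uniform neighbourhood and separation argument when the critical values $c_{1,j}$ accumulate non-consecutively, or when $J$ is unbounded. There one must verify that the $\varepsilon$-thickening of $C_p$ really stays free of other critical intervals. I would first try to use that $J$ is a closed interval on which $c_1$ is monotone away from its single critical interval. Compactness of bounded $J$ handles that case. For unbounded $J$, the spacing hypothesis forces $c_1$ to stay inside $[p-a,p]$ along an end, which excludes further critical intervals on that end, and this should give the needed $\varepsilon$.
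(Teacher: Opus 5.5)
Your local analysis is the paper's proof. The paper's Case~1 ``Y'' is your degree-$3$ vertex at one of the levels $c_{1,j}$, $c_{1,j}+a$ together with the degree-$1$ vertex at the other, and its Case~2 is your pair of degree-$2$ vertices. Your lemma that a contour $J$ contains at most one $I_j$ (two consecutive critical values in $J$ would both lie in $[p-a,p]$) is what the paper compresses into ``by $0<a<N_{{\rm P},c_1,c_2}$''. Drop the phrase ``only at an endpoint of $J$'', though: in your own degree-$3$ case $I_j$ is interior to $J$.

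\textbf{The appeal to Main Theorem~\ref{mthm:1} does not work as written.} That theorem requires a \emph{discrete} closed $Z_{\rm F}$ off which the critical values are discrete, but your hypothesis controls only \emph{consecutive} critical values. Take $c_1$ with critical points at the integers, value $10$ at the even ones and values enumerating $\mathbb{Q}\cap[0,1]$ at the odd ones, and let $a=1$. All consecutive gaps exceed $9$, yet $c_1(S(c_1))\cup c_2(S(c_2))$ is dense in $[0,2]$. So no admissible $Z_{\rm F}$ exists, even though the conclusion of Main Theorem~\ref{mthm:2} still holds in this example. The fix is to bypass Main Theorem~\ref{mthm:1}. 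The $I_j$ are locally finite in $\mathbb{R}$ (consecutive ones cannot both approach a point), and each $J$ contains at most one of them. So your $\varepsilon$-thickenings show directly that every point of the Reeb space has an arc, $1$-star or $3$-star neighbourhood, that the critical contours form a closed discrete subset, and that distinct contours are separated. That gives the graph structure whatever the values of $\bar{c}$ at the vertices.

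Two further cases are missed, by you and by the paper alike. First, your $m=2$ connectivity claim fails when a contour is an entire line inside the open region $D_{c_1,c_2}$: its preimage is then the two disjoint curves $y=\pm\sqrt{(p-c_1(x_2))(c_1(x_2)+a-p)}$. The spacing hypothesis excludes this when there are at least two critical intervals, but with only one it is vacuous. For $c_1(x)=\frac{1}{x^2+1}$, $a=2$, $m=2$, the two lines at level $2$ are both limits of the closed curves at levels $2+\varepsilon$, so the Reeb space is not Hausdorff. For constant $c_1$ and $m=2$ the Reeb space is a circle whose extremal vertices have degree $2$. So the statement itself needs an extra assumption when $m=2$, and your reduction is valid exactly when no contour is a whole line in $D_{c_1,c_2}$. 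Second, if some $I_j$ is a half-line, $c_1$ leaves the value $c_{1,j}$ on one side only. Your degree-$3$ models (local maximum at level $c_{1,j}$, local minimum at level $c_{1,j}+a$) then fail: nothing splits, and that vertex has degree $2$. If $I_j=\mathbb{R}$, both vertices have degree $1$. The degree bound and the extremum claim survive for $m\geq 3$, but this case has to be treated separately, and the paper's ``Y'' overlooks it too.
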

\begin{proof}
We investigate each critical contour of ${\pi}_{m+1,1} {\mid}_{X_{m,c_1,c_2}}$. Based on Theorem \ref{thm:2}, we investigate
 $(c_{1,j}(x),x)$ ($x \in I_{1,j}$) and $(c_{2,j}(x),x)$ ($x \in I_{2,j}=I_{1,j}$) and neighborhoods in ${\mathbb{R}}^2$ and prove the theorem. \\
\ \\
Case 1 $c_{1,j}$ is a local extremum of $c_{1}$.  \\
$c_{1,j}$ and $c_{2,j}$ are local extrema of $c_{1}$ and $c_{2}$, respectively. By the local behaviors of $c_1$ and $c_2$ and $0<a<N_{{\rm P},c_1,c_2}$, we have a subspace of the shape "Y", seen as an edge whose closure is homeomorphic to $D^1$ with two edges incident to one of the vertex of it, in the Reeb space $R_{{\pi}_{m+1,1} {\mid}_{X_{m,c_1,c_2}}}$, locally. \\
\ \\
Case 2 $c_{1,j}$ is not a local extremum of $c_{1}$.  \\
By the local behaviors of $c_1$ and $c_2$ and $0<a<N_{{\rm P},c_1,c_2}$, we have a subspace seen as a line with a pair of vertices of degree $2$ (of a graph), in the Reeb space, locally. \\
\ \\
By the observation on these cases, the Reeb space $R_{{\pi}_{m+1,1} {\mid}_{X_{m,c_1,c_2}}}$ is the Reeb graph whose vertex is of degree $1$, $2$, or $3$. The statement on the degrees of vertices for the Reeb digraph is easily shown.

This completes the proof.
\end{proof}

\begin{Ex}
\label{ex:2}
Consider $c_1(x):=\sin x$ and $c_2(x):=\sin x+1$ for Main Theorem \ref{mthm:2}. Furthermore, the graph is isomorphic to the graph presented first in the preprint \cite[Theorem 1]{kitazawa8}. There, the graph is obtained in a different way. The case $c_1(x)=\sin x$ and $c_2(x)=\sin x+1$ is not presented in \cite{kitazawa7, kitazawa8, kitazawa9}.
\end{Ex}
\begin{Rem}
\label{rem:2}
In Main Theorem \ref{mthm:2}, if we consider the case with $S(c_1)$ being empty instead, then the Reeb space is homeomorphic to $\mathbb{R}$ for any $c_2=c_1+a$ ($a \in \mathbb{R}$).
\end{Rem}
Hereafter, $0 \in {\mathbb{R}}^k$ is used for the origin of ${\mathbb{R}}^k$
\begin{MainThm}
\label{mthm:3}
Let $c_{0}$ be an $ \mathbb{R}$-$\mathbb{R}$ TS function $c_{0}$ which is bounded and whose 1st derivative ${c_{0}}^{\prime}(x)$ satisfies $-\frac{1}{a_{c,{\rm M}}} \leq {c_{0}}^{\prime}(x) \leq a_{c,{\rm m}}$ for every $x \in \mathbb{R}$, for some two real numbers $a_{c,{\rm m}}, a_{c,{\rm M}}>0$ satisfying $a_{c,{\rm m}}<a_{c,{\rm M}}$. Let $0<a_c<a_{c,{\rm M}}$ and we also assume 
that the level set ${{c_{0}}^{\prime}}^{-1}(a_{c})$ is a non-empty set ${\sqcup}_{j=1}^l {I_{0,j}}$ with $l$ being a positive integer, ${\sqcup}_{j=1}^{\infty} {I_{0,j}}$, or ${\sqcup}_{j=-\infty}^{\infty} {I_{0,j}}$, with $I_{0,j}$ being a closed and connected non-empty set as in the 1st section, just before Main Theorem \ref{mthm:0}, and that the union of tangent lines of the graph $\{(c_0(x),x) \mid x \in \mathbb{R}\}$ at all points of the image ${c_0}({{c_{0}}^{\prime}}^{-1}(a_{c}))$ is the disjoint union of at most countably many tangent lines and also a closed set of ${\mathbb{R}}^2$. By a suitable rotation, we have another function $c_1$ and $c_2:=c_1+a$ {\rm (}$a>0${\rm )} with the following.
\begin{enumerate}
\item \label{mthm:3.1} The two functions $c_1$ and $c_2$ give a case for Main Theorem \ref{mthm:1} with $Z_{\rm F}$ being empty.
\item \label{mthm:3.2} The sets $S(c_1)$ and $S(c_2)$ are both non-empty sets.
\item \label{mthm:3.3} Both the values $c_1(x)$ and $c_2(x)$ diverge to $+\infty$ as $x$ diverges to $-\infty$.
\item \label{mthm:3.4} Both the values $c_1(x)$ and $c_2(x)$ diverge to $-\infty$ as $x$ diverges to $+\infty$.

\end{enumerate}
\end{MainThm}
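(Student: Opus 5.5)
The plan is to make the rotation explicit and then read off each item (\ref{mthm:3.1})--(\ref{mthm:3.4}) from the properties of $c_0$ and from Theorem \ref{thm:2} (\ref{thm:2.1}). Write the graph of $c_0$ as $\Gamma_0:=\{(c_0(t),t) \mid t \in \mathbb{R}\}$. Its tangent vector at the point with parameter $t$ is $(c_0^{\prime}(t),1)$. Let $\rho$ be the rotation of ${\mathbb{R}}^2$ about $0 \in {\mathbb{R}}^2$ that sends the unit vector $(a_c,1)/\sqrt{1+{a_c}^2}$ to $(0,1)$. In coordinates,
$$\rho(x_1,x_2)=\Bigl(\tfrac{x_1-a_c x_2}{\sqrt{1+{a_c}^2}},\ \tfrac{a_c x_1+x_2}{\sqrt{1+{a_c}^2}}\Bigr).$$
Put $u(t):=(c_0(t)-a_c t)/\sqrt{1+{a_c}^2}$ and $v(t):=(a_c c_0(t)+t)/\sqrt{1+{a_c}^2}$, so that $\rho(\Gamma_0)=\{(u(t),v(t)) \mid t\in\mathbb{R}\}$.

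\textbf{Step 1: $\rho(\Gamma_0)$ is the graph of a smooth $c_1$.} We have $v^{\prime}(t)=(a_c c_0^{\prime}(t)+1)/\sqrt{1+{a_c}^2}$. Since $c_0^{\prime}\geq -1/a_{c,{\rm M}}$ and $0<a_c<a_{c,{\rm M}}$, we get $a_c c_0^{\prime}+1 \geq 1-a_c/a_{c,{\rm M}}>0$. So $v^{\prime}$ is bounded below by a positive constant, which makes $v$ a diffeomorphism of $\mathbb{R}$ onto $\mathbb{R}$. We then set $c_1:=u\circ v^{-1}$, so that $\rho(\Gamma_0)=\{(c_1(x),x)\mid x\in\mathbb{R}\}$, and $c_2:=c_1+a$ with $a>0$. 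The graph of $c_2$ is the translate of $\rho(\Gamma_0)$ by $(a,0)$, hence congruent to it.

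\textbf{Step 2: critical sets and items (\ref{mthm:3.2})--(\ref{mthm:3.4}).} We compute $c_1^{\prime}(v(t))=u^{\prime}(t)/v^{\prime}(t)$, which vanishes exactly when $c_0^{\prime}(t)=a_c$. Hence $S(c_1)=S(c_2)=v({c_0^{\prime}}^{-1}(a_c))$. This set is nonempty, and since $v$ is a diffeomorphism it is again a disjoint union of pieces $I_{0,j}$ of the allowed type; so $c_1,c_2$ are $\mathbb{R}$-$\mathbb{R}$ TS functions and (\ref{mthm:3.2}) holds. Because $c_0$ is bounded, $v(t)\to\pm\infty$ as $t\to\pm\infty$ and $u(t)\to\mp\infty$. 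This gives $c_1(x)\to -\infty$ as $x\to+\infty$ and $c_1(x)\to+\infty$ as $x\to-\infty$; the same holds for $c_2$, which is (\ref{mthm:3.3}) and (\ref{mthm:3.4}).

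\textbf{Step 3: item (\ref{mthm:3.1}), the main obstacle.} The tangent line of $\Gamma_0$ at a point where $c_0^{\prime}=a_c$ is parallel to $(a_c,1)$, so $\rho$ sends it to a vertical line $\{x_1=c_{1,j}\}$. Thus $c_1(S(c_1))$ is exactly the set of first coordinates of the rotated tangent lines, and $c_2(S(c_2))=c_1(S(c_1))+a$. The hypothesis says the union of these tangent lines is a closed set made of at most countably many disjoint lines. After rotation, this is a closed union of countably many vertical lines, so $c_1(S(c_1))$ is a closed countable subset of $\mathbb{R}$.

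Main Theorem \ref{mthm:1} with $Z_{\rm F}=\emptyset$ needs $c_1(S(c_1))\cup c_2(S(c_2))$ to be discrete and closed. A union of a set with its translate by $a$ is closed and discrete as soon as the set itself is, so everything reduces to discreteness of $c_1(S(c_1))$. This is the delicate point. I would first try to get it from the assumption that the tangent lines form a genuine disjoint union (each line a separate connected component of a closed set) together with local finiteness: in a bounded strip only finitely many of them can meet a compact piece of $\rho(\Gamma_0)$, because $v^{\prime}$ is bounded below and $c_0$ is bounded. If this fails in general, I would read the hypothesis in the intended sense (the tangent lines form a locally finite family) and proceed from there. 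With discreteness in hand, the three bulleted conditions of Main Theorem \ref{mthm:1} are vacuous, so (\ref{mthm:3.1}) follows.
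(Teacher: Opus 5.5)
Your route is the paper's. You rotate so that the tangent lines at the points with $c_0^{\prime}=a_c$ become parallel to the $x_2$-axis, and you use $a_c c_0^{\prime}\geq -a_c/a_{c,{\rm M}}>-1$ to keep the rotated curve a graph. You then read the critical values of $c_1$ off the rotated tangent lines and use boundedness of $c_0$ for the asymptotics. Steps 1 and 2 are correct and more explicit than the paper's sketch.

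The weak point is your first attempt in Step 3.
\begin{itemize}
\item Neither the lower bound on $v^{\prime}$ nor the boundedness of $c_0$ controls how the pieces of ${c_0^{\prime}}^{-1}(a_c)$ are distributed.
\item ``Each line is a separate connected component of the union'' is automatic for any countable family of parallel lines, so it gives nothing.
\item Concretely, take $c_0^{\prime}(t)=a_c+e^{-1/t^2}\sin(1/t)$ near $t=0$, with $a_c<a_{c,{\rm m}}$ and a suitable extension elsewhere. This satisfies every bound you invoke. It still yields countably many pairwise disjoint tangent lines with closed union. Yet the critical values $u(1/(k\pi))$ accumulate at $u(0)$, so $c_1(S(c_1))$ is not discrete.
\end{itemize}

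So your fallback is not optional. Discreteness has to come from the hypotheses themselves. That is what the paper tacitly does when it says the tangent-line condition ``means'' $Z_{\rm F}$ can be taken empty. You can make this precise in either of two ways.

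(a) Read ``disjoint union'' topologically. Put $A:=c_1(S(c_1))$. Then each line $\{\alpha\}\times\mathbb{R}$ is open in $A\times\mathbb{R}$, so every $\alpha\in A$ is isolated. Together with closedness, $A$ is closed and discrete.

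(b) Take the pieces $I_{0,j}$ to be locally finite in $\mathbb{R}$; a monotone indexing by $\mathbb{N}$ or $\mathbb{Z}$ of a closed set forces this. Combine it with properness of $u$. Since $|u(t)+a_ct/\sqrt{1+{a_c}^2}|\leq \sup|c_0|/\sqrt{1+{a_c}^2}$, a strip $\alpha\leq x_1\leq\beta$ pulls back to a bounded $t$-interval. That interval meets only finitely many $I_{0,j}$, and each contributes a single critical value. This, not the bound on $v^{\prime}$, is the correct form of your ``bounded strip'' heuristic.

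After that, your remaining reduction finishes item (1): the union with the translate by $a$ is still closed and discrete, and the bullets of Main Theorem 1 are vacuous.
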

\begin{proof}
By a suitable rotation of the graph $\{(c_0(x),x) \mid x \in \mathbb{R}\}$, around the origin $0 \in {\mathbb{R}}^2$, the tangent lines of the graph $\{(c_0(x),x) \mid x \in \mathbb{R}\}$ at all points of the image ${c_0}({{c_{0}}^{\prime}}^{-1}(a_{c,{\rm m}}))$ are transformed to lines parallel to the line $\{(0,x_2) \mid x_2 \in \mathbb{R}\}$.

 The union of the tangent lines of the graph $\{(c_0(x),x) \mid x \in \mathbb{R}\}$ at all points of the image ${c_0}({{c_{0}}^{\prime}}^{-1}(a_{c}))$ is the disjoint union of at most countably many tangent lines and also a closed set of ${\mathbb{R}}^2$. In addition, we have $-1<(-\frac{1}{a_{c,{\rm M}}}) \times  a_{c,{\rm m}}<0$. This means that two functions $c_1$ and $c_2$ for Main Theorem \ref{mthm:1} can be naturally defined, where $Z_{\rm F}$ is empty. We can see that the sets $S(c_1)$ and $S(c_2)$ are both non-empty sets. We have shown that the properties (\ref{mthm:3.1}, \ref{mthm:3.2}) are enjoyed.  We investigate asymptotic behaviors of $c_1$ and $c_2$ to show that the properties (\ref{mthm:3.3}, \ref{mthm:3.4}) are also enjoyed and to complete the proof. The function $c_0$ is bounded. This means that the relation $t_{{\rm m},c_0}<c_0(x)<t_{{\rm M},c_0}$ holds for every $x \in \mathbb{R}$, for some real numbers $t_{{\rm m},c_0}<0$ and $t_{{\rm M},c_0}>0$. This implies that the remaining properties on asymptotic behaviors are also enjoyed. This completes the proof.   
\end{proof}

\begin{Ex}
\label{ex:3}
\begin{enumerate}
\item \label{ex:3.1} We consider the case $c_0(x):=\frac{1}{x^2+1}$. The 1st derivative is calculated as ${c_0}^{\prime}(x)=\frac{2x}{{(x^2+1)}^2}$. The functions $c_0$ and ${c_0}^{\prime}$ are bounded and converge to $0$ at each infinity. In this case, we can choose $a_{c,{\rm m}}$ as the supremum of ${c_0}^{\prime}$ and an arbitrary number $a_{c,{\rm M}}>a_{c,{\rm m}}$. For each $0<a_c \leq a_{c,{\rm m}}$, we can have a case of Main Theorem \ref{mthm:3} with the sets $S(c_1)$ and $S(c_2)$ being bounded and finite. This is also for \cite[Theorem 4]{kitazawa9}.
\item \label{ex:3.2} We consider the case $c_0(x):=\sin x$. In this case, we can choose $a_{c,{\rm m}}=1$ and $a_{c,{\rm M}}=\frac{3}{2}$. For each $0<a_c \leq a_{c,{\rm m}}=1$, we can have a case of Main Theorem \ref{mthm:3} with the sets $S(c_1)$ and $S(c_2)$ being closed, discrete, and unbounded. This is also for \cite[Theorem 4]{kitazawa9}. This is also for Theorem \ref{thm:2} (\ref{thm:2.1}), by remembering exposition on derivatives in Example \ref{ex:1}.
\end{enumerate}
\end{Ex}

Hereafter, two subsets $P_1$ and $P_2$ in ${\mathbb{R}}^k$ are {\it congruent} if there exists a map mapping $P_1$ onto $P_2$ and represented by a composition of finitely many maps each of which is a rotation, a reflection, or a parallel transformation. This yields an equivalence relation on the set of all subsets in ${\mathbb{R}}^k$. Two graphs $\{(c_{0,1}(x),x) \mid  x \in \mathbb{R}\}$ and $\{(c_{0,2}(x),x) \mid \mathbb{R}\}$ of functions $c_{0,i}:\mathbb{R} \rightarrow \mathbb{R}$ (which may be not smooth or continuous) are {\it globally similar} if these sets are congruent or deformed to two graphs $\{(c_{1,1}(x),x)\}$ and $\{(c_{1,2}(x),x)\}$ of functions $c_{1,i}:\mathbb{R} \rightarrow \mathbb{R}$ by some transformations for obtaining new congruent subsets respectively in such a way that we have $c_{1,2}(x):=c_{1,1}(x)+p(x)$ with a smooth bounded function $p:\mathbb{R} \rightarrow \mathbb{R}$. Of course this relation is an equivalence relation on the set of all real-valued functions on $\mathbb{R}$. 

We also use phrases such as "{\it up to congruence}" and "{\it up to global similarity}", for example.

We can check the following easily. It is essential that for a bounded function $c:\mathbb{R} \rightarrow \mathbb{R}$, $t_{{\rm m},c}<c(x)<t_{{\rm M},c}$ holds for every $x \in \mathbb{R}$, for some real numbers $t_{{\rm m},c}<0$ and $t_{{\rm M},c}>0$ and this fundamental fact is presented in the proof of Main Theorem \ref{mthm:3}.
\begin{Cor}
\label{cor:1}
For a bounded function $c_1:\mathbb{R} \rightarrow \mathbb{R}$ and another function $c_2:\mathbb{R} \rightarrow \mathbb{R}$ such that the graphs $\{(c_1(x),x) \mid x \in \mathbb{R}\}$ and $\{(c_2(x),x) \mid x \in \mathbb{R}\}$ are congruent, we can choose a suitable real number $T_{c_1,c_2}$ and a constant real valued function $p_{c_1.c_2}$ and have the relation $c_2(x)=p_{c_1,c_2}+c_1(x-T_{c_1,c_2})$ or $c_2(x)=p_{c_1,c_2}-c_1(x-T_{c_1,c_2})$.
\end{Cor}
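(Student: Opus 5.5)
The plan is to write the congruence explicitly as an affine isometry and then read off its linear part from the geometry of the two graphs. Let $\Gamma_i:=\{(c_i(x),x) \mid x \in \mathbb{R}\}$. Any composition of finitely many rotations, reflections and parallel transformations is a map $g(v)=Av+b$ with $A \in O(2)$ and $b=(b_1,b_2) \in {\mathbb{R}}^2$, and by hypothesis $g(\Gamma_1)=\Gamma_2$. The basic fact, used exactly as in the proof of Main Theorem \ref{mthm:3}, is $t_{{\rm m},c_1}<c_1(x)<t_{{\rm M},c_1}$. Hence $\Gamma_1$ lies in the vertical strip $S_1:=\{t_{{\rm m},c_1}<x_1<t_{{\rm M},c_1}\}$. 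On the other hand, $\Gamma_1$ meets every horizontal line $\{x_2=s\}$ in exactly one point, and the same holds for $\Gamma_2$ because $c_2$ is a function on all of $\mathbb{R}$. Note that continuity of $c_1$ or $c_2$ is never used.

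The key steps, in order, are as follows.

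(i) Show that $A$ preserves the vertical direction, i.e. $Ae_2=\pm e_2$ and hence $Ae_1=\pm e_1$. The image $g(S_1)$ is a strip of bounded width in the direction $Ae_2$, and it contains $\Gamma_2$. If $Ae_2=\pm e_1$, this strip is horizontal, so $\Gamma_2$ has bounded second coordinate. That contradicts $\Gamma_2$ meeting every horizontal line. For an oblique direction I would compare first coordinates: a point of $\Gamma_1$ with large $|x_2|$ is sent to $x_2 Ae_2+O(1)$, whose first coordinate is unbounded. This has to be played against the definition of the class of graphs considered here.

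(ii) Once $A=\mathrm{diag}(\varepsilon_1,\varepsilon_2)$ with $\varepsilon_i=\pm 1$, compute directly. We have $g(c_1(x),x)=(\varepsilon_1 c_1(x)+b_1,\ \varepsilon_2 x+b_2)$, and this point lies on $\Gamma_2$, so $c_2(\varepsilon_2 x+b_2)=\varepsilon_1 c_1(x)+b_1$. For $\varepsilon_2=1$, substitute $y=x+b_2$. This gives $c_2(y)=b_1+\varepsilon_1 c_1(y-T_{c_1,c_2})$ with $T_{c_1,c_2}:=b_2$ and $p_{c_1,c_2}:=b_1$, which are exactly the two forms in Corollary \ref{cor:1}.

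(iii) Dispose of $\varepsilon_2=-1$, the reflection in the $x_2$-direction. In that case the same substitution gives $c_2(y)=b_1\pm c_1(-(y-b_2))$. This has to be brought to the stated form, or excluded by the normalization the paper implicitly uses.

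The main obstacle is (i) in the oblique case together with (iii), and I expect them to be the heart of the matter. As written, with only $c_1$ assumed bounded, I do not see how to rule out a small rotation. For example, $c_1\equiv 0$ rotated slightly gives $c_2(x)=kx$ with $k\neq 0$, which is congruent to $c_1$ but not of the stated form. Likewise $x_2\mapsto -x_2$ produces $c_1(-x)$, which need not be of the form $p\pm c_1(x-T)$ when $c_1$ has no symmetry.

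What I would try first is to argue exactly as the paper's sentence before Corollary \ref{cor:1} suggests: apply the boundedness fact to both graphs, so that $\Gamma_2$ also lies in a vertical strip. The strip $g(S_1)$ is then contained in a vertical strip, which forces $Ae_2=\pm e_2$ immediately. For (iii), I would check whether the author's convention for congruence of graphs excludes the $x_2$-reflection, in which case only $\varepsilon_2=1$ remains.

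If neither implicit reading is available, the proof as planned establishes the conclusion under those readings, and the two examples above show that they cannot simply be dropped.
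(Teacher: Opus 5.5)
You are right that Corollary~\ref{cor:1} cannot be proved as stated. Your route (i)--(ii) is also what the paper has in mind. Its only justification is the remark that a bounded $c$ satisfies $t_{{\rm m},c}<c(x)<t_{{\rm M},c}$, i.e.\ your vertical-strip observation. That remark only forces anything if it is applied to both graphs, i.e.\ if $c_2$ is tacitly assumed bounded as well. The paper never considers the case $\varepsilon_2=-1$. Your counterexample $c_1\equiv 0$, $c_2(x)=kx$ is valid. The paper in fact contradicts the literal statement itself: in Main Theorem~\ref{mthm:3} (e.g.\ $c_0=\sin x$ as in Example~\ref{ex:3}) a bounded $c_0$ is rotated into a function diverging to $\pm\infty$. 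That function's graph is congruent to that of $c_0$, but it is not of the form $p\pm c_0(x-T)$.

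Two corrections to your repair. First, in the \emph{both bounded} version of (i), it is not $g(S_1)$ that lies in a vertical strip. The argument is that $\Gamma_2=g(\Gamma_1)\subset g(S_1)\cap S_2$. Two strips of finite width in non-parallel directions meet in a bounded parallelogram, whereas $\Gamma_2$ meets every horizontal line; hence $Ae_2=\pm e_2$.

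Second, and more importantly, (iii) cannot be disposed of by a convention excluding the $x_2$-reflection. The paper's definition of congruence explicitly allows rotations and reflections, and the half-turn about $0$ is a rotation with $\varepsilon_2=-1$. It produces $c_2(x)=-c_1(-x)$. For example, take $c_1(x)=\frac{x^2(x-1)^2(x-3)^2}{(1+x^2)^4}$ and $c_2(x):=-c_1(-x)$; both are bounded and real analytic.
\begin{itemize}
\item If $c_2=p+c_1(\cdot-T)$, then $c_1(y)+c_1(-T-y)=-p$ for all $y$. The limits at infinity give $p=0$, which contradicts $c_1\geq 0$, $c_1\not\equiv 0$.
\item If $c_2=p-c_1(\cdot-T)$, then $c_1(-T-y)-c_1(y)=p$. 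Taking $y=-T/2$ gives $p=0$, so the zero set $\{0,1,3\}$ of $c_1$ would be symmetric about $-T/2$, which it is not.
\end{itemize}

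So what your steps actually establish is the following. If $c_1$ and $c_2$ are \emph{both} bounded and their graphs are congruent, then $c_2(x)=p+\varepsilon_1 c_1(\varepsilon_2(x-T))$ with $\varepsilon_1,\varepsilon_2\in\{1,-1\}$. The form in Corollary~\ref{cor:1} is the subcase $\varepsilon_2=1$. Obtaining it requires restricting to congruences that preserve the orientation of the $x_2$-axis, which excludes even the half-turn, not merely reflections.
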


\begin{Def}
\label{def:1}
For an equivalence class on the set of all graphs $\{(c(x),x) \mid x \in \mathbb{R}\}$ of all real valued functions $c:\mathbb{R} \rightarrow \mathbb{R}$ up to congruence, if the property as in Corollary \ref{cor:1} is enjoyed, then the class is said to be {\it rigid with respect to rotations} or {\it Rot-rigid}.
\end{Def}

We give answers to \cite[Problems 2 and 3]{kitazawa9}, in Main Theorems \ref{mthm:4} and \ref{mthm:5}.

\begin{Prop}
\label{prop:1}
Let $p:\mathbb{R} \rightarrow \mathbb{R}$ be a function which is always positive and is represented as the sum of finitely many functions of the form $p_j(x):=a_jx^{r_{a_j}}$ with $a_j \in \mathbb{R}-\{0\}$ and integers $r_{a_j}>1$. We can choose i.e. $p(x)=x^2+1$ for example.

A smooth function $c_{p,0}:\mathbb{R} \rightarrow \mathbb{R}$ defined by $c_{p,0}(x):=\frac{2+\sin (e^{x^2})}{p(x)}$ enjoys the following properties.
\begin{enumerate}
\item The values of the function $c_{p,0}$ is always positive. The value $c_{p,0}(x)$ converges to $0$ as $x$ diverges to $\pm \infty$.
\item The 1st derivative ${c_{p,0}}^{\prime}(x)$ of $c_{p,0}(x)$ is calculated as\\ ${c_{p,0}}^{\prime}(x)=\frac{2xe^{x^2} \times \cos(e^{x^2}) \times {p(x)}-(2+\sin (e^{x^2}))\times p^{\prime}(x)}{{p(x)}^2}$.

\end{enumerate}
\end{Prop}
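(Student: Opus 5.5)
The plan is a direct elementary verification, treating the two items of Proposition \ref{prop:1} in order. Smoothness is immediate: $x \mapsto e^{x^2}$ and $\sin$ are smooth, and $p$ is a polynomial which is assumed to be always positive, so $p(x) \neq 0$ for every $x \in \mathbb{R}$ and the quotient $c_{p,0}$ is smooth on all of $\mathbb{R}$.

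For the first property, positivity follows from $-1 \leq \sin (e^{x^2}) \leq 1$. This gives $1 \leq 2+\sin (e^{x^2}) \leq 3$, and the denominator $p(x)$ is positive by hypothesis.

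For the limit I will use the squeeze bound
$$0<c_{p,0}(x) \leq \frac{3}{p(x)}.$$
It then suffices to show that $p(x) \to +\infty$ as $x \to \pm\infty$.
\begin{itemize}
\item $p$ is a sum of monomials $a_j x^{r_{a_j}}$ with every $r_{a_j}>1$; after collecting terms with equal exponents, assume the exponents are distinct.
\item $p$ is non-constant, since it is positive and hence not identically zero, and every exponent is at least $2$. So there is a leading term $a_{j_0}x^{r}$ with $r \geq 2$ and $a_{j_0} \neq 0$, and $|p(x)| \to \infty$ as $x \to \pm\infty$.
\item Because $p>0$ everywhere, the sign forces $p(x) \to +\infty$ at both infinities. (In fact $r$ is even and $a_{j_0}>0$.)
\end{itemize}
This gives $c_{p,0}(x) \to 0$.

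The one small subtlety sits in this limit step. If all the monomials cancelled we would have $p \equiv 0$, which contradicts positivity. The proposition's wording about "finitely many functions" is also compatible with a constant term being absent, which strictly speaking conflicts with the example $p(x)=x^2+1$. I will therefore state the argument so that it only uses that $p$ is a positive non-constant polynomial, so that it also covers $x^2+1$.

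For the second property, I will apply the quotient rule
$$(u/p)^{\prime}=\frac{u^{\prime}p-up^{\prime}}{p^2}, \qquad u(x)=2+\sin(e^{x^2}).$$
By the chain rule, $u^{\prime}(x)=\cos(e^{x^2}) \cdot e^{x^2} \cdot 2x$. Substituting this into the quotient rule gives exactly the displayed formula for ${c_{p,0}}^{\prime}(x)$. This step is routine, and I expect no real obstacle anywhere in the proof.
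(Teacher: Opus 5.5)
Your proposal is correct and matches the paper, which simply treats this as an elementary calculus exercise: bound $1\le 2+\sin(e^{x^2})\le 3$, use $p(x)\to+\infty$ at both infinities, and apply the quotient and chain rules. Your flag on the hypothesis is warranted and is actually sharper than you state it: if every exponent satisfies $r_{a_j}>1$, then $p(0)=0$, so no such $p$ can be always positive, and reading the hypothesis as ``$p$ is a positive non-constant polynomial'' (as in $p(x)=x^2+1$) is the right repair.
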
This is a kind of fundamental exercises from elementary calculus.
By considering $x$ satisfying $\cos (e^{x^2})=\pm 1$ and asymptotic behaviors of $e^{x^2}$ and real polynomial functions (or so-called {\it real algebraic} functions), we have the following.
\begin{Prop}
\label{prop:2}
Let $c_{p,0}$ be a function of Proposition \ref{prop:1}.
There exist a sequence $\{r_{{\rm P},j}\}_{j=1}^{\infty}$ of positive numbers diverging to $\infty$
such that the sequence $\{{c_{p,0}}^{\prime}(r_{{\rm P},j})\}_{j=1}^{\infty}$ diverges to $\infty$
 and one such that the sequence $\{{c_{p,0}}^{\prime}(r_{{\rm P},j})\}_{j=1}^{\infty}$ diverges to $-\infty$.
There exist a sequence $\{r_{{\rm N},j}\}_{j=1}^{\infty}$ of negative numbers diverging to $-\infty$
such that the sequence $\{{c_{p,0}}^{\prime}(r_{{\rm N},j})\}_{j=1}^{\infty}$ diverges to $\infty$
 and one such that the sequence $\{{c_{p,0}}^{\prime}(r_{{\rm N},j})\}_{j=1}^{\infty}$ diverges to $-\infty$.

\end{Prop}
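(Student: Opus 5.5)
Prove Proposition \ref{prop:2} by evaluating the formula for ${c_{p,0}}^{\prime}$ from Proposition \ref{prop:1} along explicit sequences on which $\cos(e^{x^2})=\pm 1$. On such points $\sin(e^{x^2})=0$, so the numerator simplifies to
\[
{c_{p,0}}^{\prime}(x)=\frac{\pm 2xe^{x^2}\,p(x)-2p^{\prime}(x)}{{p(x)}^2}=\pm\frac{2xe^{x^2}}{p(x)}-\frac{2p^{\prime}(x)}{{p(x)}^2}.
\]

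\textbf{Choosing the sequences.} For positive integers $k$ large enough that $2k\pi>1$ and $(2k+1)\pi>1$, set
\[
r_{{\rm P},k}^{+}:=\sqrt{\log(2k\pi)}, \qquad r_{{\rm P},k}^{-}:=\sqrt{\log((2k+1)\pi)}.
\]
These are positive, strictly increasing, and diverge to $\infty$. At them $e^{x^2}=2k\pi$ or $(2k+1)\pi$, so $\cos(e^{x^2})=1$ or $-1$ respectively. For the negative side use $r_{{\rm N},k}^{\pm}:=-r_{{\rm P},k}^{\pm}$, which are negative and diverge to $-\infty$. Because $x$ changes sign there, the sign of $\pm 2xe^{x^2}/p(x)$ flips. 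So on the negative side the $\cos=-1$ sequence gives $+\infty$ and the $\cos=1$ sequence gives $-\infty$. This pairing is the only case bookkeeping needed.

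\textbf{Asymptotics.} Since $p$ is a nonzero polynomial that is always positive, its degree $d\geq 2$ is even and its leading coefficient is positive. Hence $p(x)\geq C|x|^d$ for $|x|$ large, $p^{\prime}(x)=O(|x|^{d-1})$, and so $p^{\prime}(x)/{p(x)}^2=O(|x|^{-d-1})\to 0$. The main term satisfies $|2xe^{x^2}/p(x)|\geq c\,|x|e^{x^2}/|x|^d$, which diverges to $\infty$ because $e^{x^2}$ beats every power of $|x|$; one can note $e^{x^2}\geq x^{2N}/N!$ for any $N$. Combining the two pieces, ${c_{p,0}}^{\prime}$ along each chosen sequence diverges to $+\infty$ or $-\infty$ with the sign determined above.

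\textbf{Main obstacle.} The only step needing care is the claim that $p$ has positive leading coefficient and even degree, so that $p(x)$ is comparable to $|x|^d$ at both infinities. Positivity of $p$ on $\mathbb{R}$ forces this, since otherwise $p$ would tend to $-\infty$ at one end. With that in hand, the domination of the exponential over polynomial terms is routine.
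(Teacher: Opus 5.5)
Your argument is correct and takes the same route as the paper. The paper likewise evaluates ${c_{p,0}}^{\prime}$ at points where $\cos(e^{x^2})=\pm 1$ and relies on $e^{x^2}$ dominating polynomial growth; you make this explicit with $\pm\sqrt{\log(2k\pi)}$ and $\pm\sqrt{\log((2k+1)\pi)}$, including the sign flip for negative $x$.
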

We can have a connected component of a hyperbola, an important real algebraic curve of degree $2$ in ${\mathbb{R}}^2$. For any $r>0$, we can define a smooth function $c_{{\rm H},+\infty,r}:\mathbb{R} \rightarrow \mathbb{R}$ such that the graph is a connected component of a hyperbola congruent to a hyperbola of the form $\{(x_1,x_2) \in {\mathbb{R}}^2 \mid {x_1}^2-r{x_2}^2=1\}$ and that the value $c_{{\rm H},+\infty,r}(t)$ diverges to $+\infty$ as $t$ diverges to $\pm \infty$. We can have the function with the following properties, uniquely.
\begin{itemize}

\item The critical set $S(c_{{\rm H},+\infty,r})$ of $c_{{\rm H},+\infty,r}$ consists of exactly one point, which is $0 \in \mathbb{R}$.
\item The minimum of the function is $1$. $c_{{\rm H},+\infty,r}(x)=c_{{\rm H},+\infty,r}(-x)$.
\item The 1st derivative ${c_{{\rm H},+\infty,r}}^{\prime}$ of $c_{{\rm H},+\infty,r}$ is bounded.
\item The critical set $S({c_{{\rm H},+\infty,r}}^{\prime})$ of the 1st derivative ${c_{{\rm H},+\infty,r}}^{\prime}$ is empty.
\end{itemize}
Main Theorem \ref{mthm:4} gives a case similar to a case of Main Theorem \ref{mthm:3}. This is also an example for \cite[Theorem 4]{kitazawa9} with $S(c_1) \bigcup S(c_2) \subset \mathbb{R}$ being unbounded and both the values $c_1(x)$ and $c_2(x)$ diverging to $+\infty$ as $x$ diverges to $\pm \infty$. Such a case is not in the original preprint \cite{kitazawa9}. 

\begin{MainThm}
\label{mthm:4}
By considering $c_1:=c_{{\rm H},+\infty,r}$ above and $c_2:=c_{{\rm H},+\infty,r}+c_{p,0}$ for some $c_{p,0}$ which is from Proposition \ref{prop:1} and the real function $p:\mathbb{R} \rightarrow \mathbb{R}$ is always positive such as $p(x)=x^2+1$, we have a case with the following properties.
\begin{enumerate}
\item \label{mthm:4.1} The two functions $c_1$ and $c_2$ give a case for Main Theorem \ref{mthm:1} with $Z_{\rm F}$ being empty.
\item \label{mthm:4.2} The set $S(c_1) \bigcup S(c_2) \subset \mathbb{R}$ is unbounded.
\item \label{mthm:4.3} Both the values $c_1(x)$ and $c_2(x)$ diverge to $+\infty$ as $x$ diverges to $-\infty$.
\item \label{mthm:4.4} Both the values $c_1(x)$ and $c_2(x)$ diverge to $+\infty$ as $x$ diverges to $+\infty$.

\end{enumerate}
\end{MainThm}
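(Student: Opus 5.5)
The plan is to make both functions explicit, check the four properties one at a time, and obtain the discreteness needed for (\ref{mthm:4.1}) from real analyticity together with properness.

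Take $c_1(x):=c_{{\rm H},+\infty,r}(x)=\sqrt{1+rx^2}$, the branch of $\{{x_1}^2-r{x_2}^2=1\}$ with $x_1>0$, written as $x_1=c_1(x_2)$. Its derivative is ${c_1}^{\prime}(x)=\frac{rx}{\sqrt{1+rx^2}}$, so $|{c_1}^{\prime}|<\sqrt{r}$ and $S(c_1)=\{0\}$. Let $c_2:=c_1+c_{p,0}$. By Proposition \ref{prop:1}, $c_{p,0}>0$, so $c_1<c_2$ everywhere and Theorem \ref{thm:1} applies. Properties (\ref{mthm:4.3}) and (\ref{mthm:4.4}) are then immediate: $c_1(x)\geq \sqrt{r}|x|\to+\infty$ as $x\to\pm\infty$, and $c_{p,0}(x)\to 0$ by Proposition \ref{prop:1}.

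For (\ref{mthm:4.2}) I would use ${c_2}^{\prime}={c_1}^{\prime}+{c_{p,0}}^{\prime}$ with $|{c_1}^{\prime}|<\sqrt{r}$. By Proposition \ref{prop:2} there are points $r_{{\rm P},j}\to+\infty$ with ${c_{p,0}}^{\prime}(r_{{\rm P},j})\to+\infty$, and other points $r_{{\rm P},j}^{\prime}\to+\infty$ with ${c_{p,0}}^{\prime}(r_{{\rm P},j}^{\prime})\to-\infty$. For large $j$, ${c_2}^{\prime}$ is therefore positive at the first sequence and negative at the second. Interlacing the two sequences and applying the intermediate value theorem gives zeros of ${c_2}^{\prime}$ in $[R,\infty)$ for every $R$, and the same argument with $r_{{\rm N},j}$ works on the negative side. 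So $S(c_2)$ is unbounded.

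For (\ref{mthm:4.1}) I would argue as follows.
\begin{enumerate}
\item $c_1$ and $c_2$ are real analytic on $\mathbb{R}$, since $\sqrt{1+rx^2}$, $\sin(e^{x^2})$ and $\frac{1}{p}$ with $p>0$ a polynomial are real analytic.
\item ${c_2}^{\prime}$ is not identically zero, because it changes sign, so its zero set is discrete and closed. Hence $S(c_2)$ is a disjoint union of at most countably many one-point sets, both $c_1$ and $c_2$ are $\mathbb{R}$-$\mathbb{R}$ TS functions, and each compact set meets $S(c_2)$ in finitely many points.
\item Both $c_i$ are proper, since $c_i\to+\infty$ at $\pm\infty$. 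Hence ${c_i}^{-1}([-K,K])$ is compact for every $K$, and it contains only finitely many critical points.
\item So $c_1(S(c_1))\cup c_2(S(c_2))$ has finitely many points in every bounded interval, which means it is discrete and closed in $\mathbb{R}$. Taking $Z_{\rm F}=\emptyset$, the hypotheses of Main Theorem \ref{mthm:1} hold, because its three bulleted conditions are only imposed at points of $Z_{\rm F}$.
\end{enumerate}

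The main obstacle is Proposition \ref{prop:2}, which the paper only sketches. To verify it, I would choose $x$ with $\cos(e^{x^2})=\pm1$, i.e.\ $e^{x^2}=k\pi$ for large integers $k$ of the appropriate parity. At such points $\sin(e^{x^2})=0$, and by the formula in Proposition \ref{prop:1},
\[
{c_{p,0}}^{\prime}(x)=\pm\frac{2xe^{x^2}}{p(x)}-\frac{2p^{\prime}(x)}{p(x)^2}.
\]
The first term grows like $e^{x^2}$ divided by a polynomial, and the second term tends to $0$, so the values diverge to $+\infty$ or to $-\infty$ according to the sign and the sign of $x$. A secondary point to check is that a general positive $p$ of the allowed form keeps $c_{p,0}$ smooth (indeed analytic) and tending to $0$; this holds because $p$ is positive, so $1/p$ is analytic, and $p$ is a nonconstant polynomial (every $r_{a_j}>1$) that never vanishes, so $p(x)\to\infty$ as $x\to\pm\infty$.
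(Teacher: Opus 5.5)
Your proposal is correct and follows the route the paper intends. The paper's own proof is only the remark that the statement follows from the listed properties of $c_{{\rm H},+\infty,r}$ together with Propositions \ref{prop:1} and \ref{prop:2}, and you use exactly those ingredients: bounded ${c_1}^{\prime}$ against the divergent sequences for ${c_{p,0}}^{\prime}$ gives, via the intermediate value theorem, an unbounded $S(c_2)$, and positivity of $c_{p,0}$ gives the asymptotics. Your analyticity-plus-properness argument additionally supplies the discreteness and closedness of $c_1(S(c_1))\cup c_2(S(c_2))$, which the paper leaves implicit.

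One cosmetic point. Read literally, the hypothesis that every exponent satisfies $r_{a_j}>1$ forces $p(0)=0$, which contradicts positivity and also excludes the paper's own example $x^2+1$. So justify $p(x)\to+\infty$ simply from $p$ being a positive polynomial of positive degree, which is all your argument actually uses.
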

We can prove easily from the arguments on functions above. Note that $\{(c_1(x),x) \mid x \in \mathbb{R}\}$ and $\{(c_2(x),x) \mid x \in \mathbb{R}\}$ are globally similar.

Note that $c_1:=c_{{\rm H},+\infty,r}$ is not R-rigid up to congruence. By rotating the graph $\{(c_{{\rm H},+\infty,r}(x),x) \mid x \in \mathbb{R}\}$ slightly, we can understand this.

\begin{Prop}
\label{prop:3}
Let $p:\mathbb{R} \rightarrow \mathbb{R}$ be a function which is always positive and is represented as the sum of finitely many functions of the form $p_j(x):=a_jx^{r_{a_j}}$ with $a_j \in \mathbb{R}-\{0\}$ and integers $r_{a_j}>1$. We can choose i.e. $p(x)=x^2+1$ for example.

A smooth function $c_{p,0,0}:\mathbb{R} \rightarrow \mathbb{R}$ defined by $c_{p,0,0}(x):=\frac{2+\sin (e^{x})}{p(x)}$ enjoys the following properties.
\begin{enumerate}
\item The values of the function $c_{p,0,0}$ is always positive. The value $c_{p,0,0}(x)$ converges to $0$ as $x$ diverges to $\pm \infty$.
\item The 1st derivative ${c_{p,0,0}}^{\prime}(x)$ of $c_{p,0,0}(x)$ is calculated as\\ ${c_{p,0,0}}^{\prime}(x)=\frac{e^{x} \times \cos(e^{x}) \times {p(x)}-(2+\sin (e^{x}))\times p^{\prime}(x)}{{p(x)}^2}$.

\end{enumerate}
\end{Prop}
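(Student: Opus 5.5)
The plan is a direct elementary-calculus verification, parallel to Proposition \ref{prop:1}. Set $u(x):=2+\sin (e^{x})$, so that $c_{p,0,0}=u/p$. Both $u$ and $p$ are compositions and products of smooth functions, and $p$ never vanishes, so the quotient $c_{p,0,0}$ is well defined and smooth on $\mathbb{R}$.

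For property (1), the first step is to note that $-1 \leq \sin(e^{x}) \leq 1$. This gives the bound $1 \leq u(x) \leq 3$ for every $x$. Since $p$ is assumed always positive, it follows that $c_{p,0,0}(x) \geq \frac{1}{p(x)}>0$, which settles positivity. For the limit, I will show that $p(x)$ diverges to $+\infty$ as $x$ diverges to $\pm \infty$. Write $p$ as a real polynomial and let $a_{j_0}x^{r_{a_{j_0}}}$ be the term of highest degree, with $r_{a_{j_0}}>1$. Because $p$ is positive on all of $\mathbb{R}$, this leading term must have even degree and positive coefficient: an odd degree would force a sign change, and a negative leading coefficient would make $p$ negative for large $|x|$. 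Hence $p(x) \rightarrow +\infty$ at both infinities. Combining this with $0<c_{p,0,0}(x) \leq \frac{3}{p(x)}$ shows that $c_{p,0,0}(x) \rightarrow 0$ as $x \rightarrow \pm \infty$.

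For property (2), I will apply the quotient rule $(u/p)^{\prime}=\frac{u^{\prime}p-up^{\prime}}{p^2}$. The chain rule gives $u^{\prime}(x)=\cos (e^{x}) \times e^{x}$. Substituting this into the quotient rule yields exactly the stated formula. This is also the same computation as in Proposition \ref{prop:1}, with $e^{x^2}$ replaced by $e^{x}$, so that its derivative $2xe^{x^2}$ becomes $e^{x}$.

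No step here presents a real obstacle. The only point needing a small argument is the limit at $\pm\infty$, specifically the claim that a positive polynomial has an even-degree leading term with positive coefficient. Everything else is routine differentiation.
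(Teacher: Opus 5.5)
Your proposal is correct and takes the same route as the paper, which gives no argument beyond calling this an elementary calculus exercise. The bound $1 \leq 2+\sin(e^{x}) \leq 3$, the fact that a nonconstant polynomial positive on all of $\mathbb{R}$ has even degree and positive leading coefficient (so $p(x) \to +\infty$ and $0<c_{p,0,0}(x) \leq 3/p(x) \to 0$), and the quotient and chain rules supply exactly the details the paper omits.

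One remark on the hypothesis itself: read literally, every sum of terms $a_jx^{r_{a_j}}$ with $r_{a_j}>1$ vanishes at $x=0$, so no such $p$ is always positive. The intended reading evidently allows a positive constant term, as in the example $x^2+1$, and your argument goes through unchanged under that reading.
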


This is also a kind of fundamental exercises from elementary calculus.
By considering $x$ satisfying $\cos (e^{x})=\pm 1$ in $x>0$ and asymptotic behaviors of $e^{x}$ and real polynomial functions (or so-called {\it real algebraic} functions), we have the following.
\begin{Prop}
\label{prop:4}
Let $c_{p,0,0}$ be a function of Proposition \ref{prop:3}.
There exist a sequence $\{r_{{\rm P},j}\}_{j=1}^{\infty}$ of positive numbers diverging to $\infty$
such that the sequence $\{{c_{p,0,0}}^{\prime}(r_{{\rm P},j})\}_{j=1}^{\infty}$ diverges to $\infty$
 and one such that the sequence $\{{c_{p,0,0}}^{\prime}(r_{{\rm P},j})\}_{j=1}^{\infty}$ diverges to $-\infty$.
The value ${c_{p,0,0}}^{\prime}(x)$ converges to $0$ as $x$ diverges to $-\infty$.

\end{Prop}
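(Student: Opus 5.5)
The statement to prove is Proposition \ref{prop:4}. I would work directly from the derivative formula of Proposition \ref{prop:3}:
\[
{c_{p,0,0}}^{\prime}(x)=\frac{e^{x}\cos(e^{x})}{p(x)}-\frac{(2+\sin(e^{x}))\,p^{\prime}(x)}{{p(x)}^2}.
\]
I would treat the two terms separately. The second term is the easy one. Since $p$ is a positive polynomial of degree at least $2$, $|p^{\prime}(x)|/p(x)^2$ behaves like $|x|^{-d-1}$ as $|x|\to\infty$, where $d$ is the degree of $p$. Since $2+\sin(e^{x})\in[1,3]$, the second term is bounded on $\mathbb{R}$ and tends to $0$ as $x\to\pm\infty$.

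For $x\to+\infty$, I would choose points where $\cos(e^{x})=\pm 1$. Set $r_{{\rm P},j}:=\log(2\pi j)$ for the first sequence and $r_{{\rm P},j}:=\log((2j+1)\pi)$ for the second. These are positive for $j\geq 1$ and diverge to $\infty$. At these points $\sin(e^{x})=0$, so
\[
{c_{p,0,0}}^{\prime}(r_{{\rm P},j})=\pm\frac{e^{r_{{\rm P},j}}}{p(r_{{\rm P},j})}-\frac{2p^{\prime}(r_{{\rm P},j})}{p(r_{{\rm P},j})^2}.
\]
The main step is the elementary asymptotic fact that $e^{x}/p(x)\to+\infty$ as $x\to+\infty$, because exponential growth dominates any polynomial. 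I would prove it by noting that $e^{x}\geq x^{d+1}/(d+1)!$ for $x>0$ and that $p(x)\leq C x^{d}$ for large $x$. Combined with the boundedness of the second term, this gives divergence to $+\infty$ along the first sequence and to $-\infty$ along the second.

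For $x\to-\infty$, the first term satisfies $|e^{x}\cos(e^{x})/p(x)|\leq e^{x}/\min p$, which tends to $0$. Here $\min p>0$ exists because $p$ is positive and tends to $+\infty$ at both ends. The second term tends to $0$ as shown above, so ${c_{p,0,0}}^{\prime}(x)\to 0$ as $x\to-\infty$.

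The only step needing care is the comparison of $e^{x}$ with $p(x)$, together with checking that $p$ has a positive minimum. Both are routine, so no real obstacle is expected.
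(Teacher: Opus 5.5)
Your proposal is correct and is essentially the paper's argument. The paper justifies the proposition only by the remark that one evaluates at points with $\cos(e^{x})=\pm 1$ in $x>0$ and uses that $e^{x}$ dominates polynomials. Your choices $r_{{\rm P},j}=\log(2\pi j)$ and $\log((2j+1)\pi)$, the bound $e^{x}\geq x^{d+1}/(d+1)!$, and your separate treatment of $x\to-\infty$ fill in exactly those details.

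Your reading of $p$ as a positive polynomial of degree at least $2$, such as $x^2+1$, is the right interpretation. Taken literally, the hypothesis that every exponent exceeds $1$ would force $p(0)=0$, contradicting positivity.
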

Main Theorem \ref{mthm:5} is an answer to \cite[Problem 2 and Problem 3]{kitazawa9}. 

\begin{MainThm}
\label{mthm:5}
Let $p_1 \in \mathbb{R}$. By choosing two globally similar real analytic functions $c_1$ and $c_2$ suitably, we have a case with the following properties.
\begin{enumerate}
\item \label{mthm:5.1} The two functions $c_1$ and $c_2$ give a case for Main Theorem \ref{mthm:1} with $Z_{\rm F}=\{p_1\}$. Their images are both $\{x>p_1 \mid x \in \mathbb{R}\}$.
\item \label{mthm:5.2} The set $S(c_1)$ is empty and the set $S(c_2) \subset \mathbb{R}$ is unbounded.
\item \label{mthm:5.3} Both the values $c_1(x)$ and $c_2(x)$ converge to $p_1$ as $x$ diverges to $-\infty$.
\item \label{mthm:5.4} Both the values $c_1(x)$ and $c_2(x)$ diverge to $+\infty$ as $x$ diverges to $+\infty$.
\end{enumerate}
We can also have a case with the properties {\rm (}\ref{mthm:5.1}, \ref{mthm:5.3}, \ref{mthm:5.4}, \ref{mthm:5.5}{\rm )}.
\begin{enumerate}
\setcounter{enumi}{4}
\item \label{mthm:5.5} The sets $S(c_1) \subset \mathbb{R}$ and $S(c_2) \subset \mathbb{R}$ are unbounded.
\end{enumerate}

We can also have a case with the properties {\rm (}\ref{mthm:5.1}, \ref{mthm:5.3}, \ref{mthm:5.4}, \ref{mthm:5.6}{\rm )}.
\begin{enumerate}
\setcounter{enumi}{5}
\item \label{mthm:5.6} The sets $S(c_1) \subset \mathbb{R}$ and $S(c_2) \subset \mathbb{R}$ are bounded below and unbounded above.
\end{enumerate}
\end{MainThm}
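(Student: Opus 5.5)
The plan is to build all three cases from the functions $c_{p,0,0}$ of Proposition \ref{prop:3} with $p(x)=x^2+1$, added to a slowly growing exponential. For the first case I take
$$c_1(x):=p_1+e^{x/2},\qquad c_2(x):=c_1(x)+c_{p,0,0}(x)=p_1+e^{x/2}+\frac{2+\sin(e^{x})}{x^2+1}.$$
Both functions are real analytic, because $x^2+1$ never vanishes. They are globally similar, since $c_{p,0,0}$ is smooth and bounded, with $1/(x^2+1)\le c_{p,0,0}(x)\le 3/(x^2+1)$. We have $c_1<c_2$ because $c_{p,0,0}>0$.

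\textbf{First case.} The endpoint behaviour is immediate.
\begin{itemize}
\item Clearly $c_1>p_1$ and $c_2>p_1$.
\item Both tend to $p_1$ as $x\to-\infty$, since $c_{p,0,0}\to 0$ there.
\item Both tend to $+\infty$ as $x\to+\infty$.
\end{itemize}
By continuity both images are exactly $\{x>p_1\}$, which gives (\ref{mthm:5.3}), (\ref{mthm:5.4}) and the image part of (\ref{mthm:5.1}). Since $c_1'=\tfrac12e^{x/2}>0$, the set $S(c_1)$ is empty.

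For $S(c_2)$, Proposition \ref{prop:3} gives
$$c_2'(x)=\tfrac12e^{x/2}+\frac{e^x\cos(e^x)}{x^2+1}-\frac{2x(2+\sin(e^x))}{(x^2+1)^2}.$$
Choose points $x\to+\infty$ with $\cos(e^x)=\pm1$, as in Proposition \ref{prop:4}. At such points the middle term has size $e^x/(x^2+1)$, which dominates $e^{x/2}$, and the last term is $O(1/x^3)$. So $c_2'$ takes both signs along two interleaving sequences tending to $+\infty$, and by the intermediate value theorem $S(c_2)$ is unbounded, giving (\ref{mthm:5.2}).

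The TS property holds because $c_2'$ is a nonconstant real analytic function, so its zeros are isolated.

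\textbf{Critical values.} For (\ref{mthm:5.1}) I need $c_1(S(c_1))\cup c_2(S(c_2))$ to be discrete and closed in $\mathbb{R}-\{p_1\}$.

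I would first show that $S(c_2)$ is bounded below. For $x\to-\infty$:
\begin{itemize}
\item the term $-2x(2+\sin e^x)/(x^2+1)^2$ is positive and of order $|x|^{-3}$;
\item the term $e^x\cos(e^x)/(x^2+1)$ is $O(e^x)$;
\item the term $\tfrac12e^{x/2}$ is positive.
\end{itemize}
Hence $c_2'>0$ for $x\ll0$.

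On any compact interval, $S(c_2)$ is finite. As $x\to+\infty$ we have $c_2\ge e^{x/2}\to+\infty$, so only finitely many critical values lie in any bounded set. Therefore the critical value set is discrete and closed even in $\mathbb{R}$.

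The conditions of Main Theorem \ref{mthm:1} at $p\in Z_{\rm F}=\{p_1\}$ are vacuous, because ${\pi_{2,1}}^{-1}(p_1)\cap\overline{D_{c_1,c_2}}$ is empty.

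\textbf{Second and third cases.} I take
$$c_1:=p_1+e^{x/2}+c_{p,0,0},\qquad c_2:=p_1+e^{x/2}+2c_{p,0,0}.$$
The same estimates apply, now to both $c_1'$ and $c_2'$. They show that each $S(c_i)$ is unbounded above, because the $e^x\cos(e^x)/(x^2+1)$ term dominates, and bounded below, because the derivative is positive for $x\ll0$. Hence one construction gives both (\ref{mthm:5.5}) and (\ref{mthm:5.6}).

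The other items carry over unchanged.
\begin{itemize}
\item Global similarity holds because $c_2-c_1=c_{p,0,0}$ is bounded.
\item The images are $\{x>p_1\}$ and the limits are as before.
\item Discreteness of the critical values holds as before.
\end{itemize}

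\textbf{Main obstacle.} The delicate step is the sign analysis of the derivative. At $+\infty$, the choice of exponent $1/2$ is exactly what lets the oscillating term beat $e^{x/2}$. At $-\infty$, I must check that the $O(e^x)$ oscillation cannot cancel the positive terms $\tfrac12e^{x/2}$ and $|x|^{-3}$. This is the step I would write out carefully.
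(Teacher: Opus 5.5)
Your argument is correct. It has the same skeleton as the paper's proof: a critical-point-free real analytic base function tending to $p_1$ from above at $-\infty$ and to $+\infty$ at $+\infty$, plus a positive, bounded, oscillating perturbation of the form in Propositions~1 and~3. This makes $c_1<c_2$ hold, makes global similarity automatic, and leaves the level set over $p_1$ empty.

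The ingredients differ, though. The paper's base is a hyperbola branch $c_{\mathrm{H},p_1,p_2}$ with bounded derivative. For the first two cases the paper perturbs it by $c_{p,0}=(2+\sin(e^{x^2}))/(x^2+1)$, which oscillates at both ends; only the third case uses $c_{p,0,0}$. So in the paper's first two cases the critical sets are unbounded in both directions, and the critical values accumulate at $p_1$ as $x\to-\infty$. That accumulation is exactly what $Z_{\rm F}=\{p_1\}$ is there to absorb, and it is what makes (5) genuinely different from (6).

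You use $p_1+e^{x/2}$ together with only $c_{p,0,0}$. The base derivative $\frac12e^{x/2}$ beats the $O(e^x)$ oscillating term outright at $-\infty$ (for instance for all $x<-4\log 2$ in each of your cases), yet loses to $e^x/(x^2+1)$ at $+\infty$. Hence all your critical sets are bounded below, the critical values are discrete and closed in all of $\mathbb{R}$, and the choice $Z_{\rm F}=\{p_1\}$ holds only vacuously. What you gain is fully explicit, easily checked estimates, whereas the paper only sketches $c_{\mathrm{H},p_1,p_2}$. What you give up is the accumulation phenomenon at $p_1$ and two-sided unboundedness of the critical sets.

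Realizing (5) and (6) with a single pair is legitimate under the literal meaning of ``unbounded''. If (5) is meant as in the paper's example (unbounded above and below), replace $c_{p,0,0}$ by $c_{p,0}$ in your second pair. Your base still works there. Discreteness in $\mathbb{R}-\{p_1\}$ follows because critical values in $[p_1+\varepsilon,M]$ come from critical points in a compact $x$-interval, since $c_i\to p_1$ at $-\infty$ and $c_i\geq p_1+e^{x/2}$.
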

\begin{proof}
For $p_1 \in \mathbb{R}$ and $p_2>0$, we can define a smooth function $c_{{\rm H},p_1,p_2}:\mathbb{R} \rightarrow \mathbb{R}$ such that the graph is a connected component of a hyperbola congruent to a hyperbola of the form $\{(x_1,x_2) \in {\mathbb{R}}^2 \mid x_1x_2+p_2=0\}$ and that the value $c_{{\rm H},r,p_1,p_2}(t)$ converges to $p_1$ as $t$ diverges to $-\infty$ and that the value $c_{{\rm H},p_1,p_2}(t)$ diverges to $+\infty$ as $t$ diverges to $+\infty$. In addition, we can obtain the function with the following properties. 
\begin{itemize}
\item The critical set $S(c_{{\rm H},p_1,p_2})$ of $c_{{\rm H},p_1,p_2}$ is empty.
\item The 1st derivative ${c_{{\rm H},p_1,p_2}}^{\prime}$ is bounded.
\item The critical set $S({c_{{\rm H},p_1,p_2}}^{\prime})$ of the 1st derivative ${c_{{\rm H},p_1,p_2}}^{\prime}$ is empty.

\end{itemize}
We remember Proposition \ref{prop:2}.

We also remember the equation for a hyperbola of the form $x_1x_2+p_2=0$ ($x_2=-\frac{p_2}{x_1}$). We choose a certain transformation of coordinates via linear transformation to $x_{1,1}=x_1-\frac{p_3}{x_{1}}$ with a suitable number $p_3 \neq 0$ and $x_{2,1}=x_2$, composed with a parallel transformation to $x_{1,2}=x_{1,1}$ and $x_{2,2}=x_{2,1}+p_1$, to have the graph $\{(x_{1,2},c_{{\rm H},p_1,p_2}(x_{1,2})) \mid x_{1,2} \in \mathbb{R}\}$. We also need to consider the asymptotic behavior at each infinity to make $c_1(x)>p_1$ hold for some ${p}_{\rm m}$ and each real number $x<{p}_{\rm m}$.

We can define $c_1(x_{1,2}):=c_{{\rm H},p_1,p_2}(x_{1,2})$ and $c_2(x_{1,2}):=c_{{\rm H},p_1,p_2}(x_{1,2})+c_{p,0}(x_{1,2})$, with $c_{p,0}$ from Propositions \ref{prop:1} and \ref{prop:2}, and have a case for the first statement. More precisely, we choose $p(x)=x^2+1$ for example.

We can define $c_1(x_{1,2}):=c_{{\rm H},p_1,p_2}(x_{1,2})+\frac{1}{2} \times c_{p,0}(x_{1,2})$ and $c_2(x_{1,2}):=c_{{\rm H},p_1,p_2}(x_{1,2})+c_{p,0}(x_{1,2})$, with $c_{p,0}$ from Propositions \ref{prop:1} and \ref{prop:2}, and have a case for the second statement. 

We define $c_1(x_{1,2}):=c_{{\rm H},p_1,p_2}(x_{1,2})+\frac{1}{2} \times c_{p,0,0}(x_{1,2})$ and $c_2(x_{1,2}):=c_{{\rm H},p_1,p_2}(x_{1,2})+c_{p,0,0}(x_{1,2})$, with $c_{p,0,0}$ from Propositions \ref{prop:3} and \ref{prop:4}. Remember the asymptotic behavior of $c_{p,0,0}$ and the 1st derivative ${c_{p,0,0}}^{\prime}$ and we can see that the addition of $\frac{1}{2} \times c_{p,0,0}(x_{1,2})$ and that of $c_{p,0,0}(x_{1,2})$ to $c_{{\rm H},p_1,p_2}(x_{1,2})$ do not change the signs of the vaues of the 1st derivatives at the infinity $-\infty$. We can see that this gives a case for the third statement. 

This completes the proof.
\end{proof}

Main Theorem \ref{mthm:6} is another answer to \cite[Problem 3]{kitazawa9} and we have a case of \cite[Proposition 3 and Theorem 3]{kitazawa9}. 

\begin{MainThm}
\label{mthm:6}
Let $p_1, p_2 \in \mathbb{R}$ be distinct two numbers. By choosing two globally similar real analytic functions $c_1$ and $c_2$ suitably, we have a case with the following properties.
\begin{enumerate}
\item \label{mthm:6.1} The two functions $c_1$ and $c_2$ give a case for Main Theorem \ref{mthm:1} with $Z_{\rm F}=\{p_1,p_2\}$. Their images are both $\{p_1<x<p_2 \mid x \in \mathbb{R}\}$ in the case $p_1<p_2$ and both $\{p_2<x<p_1 \mid x \in \mathbb{R}\}$ in the case $p_1>p_2$.
\item \label{mthm:6.2} The set $S(c_1)$ is empty and the set $S(c_2) \subset \mathbb{R}$ is unbounded.
\item \label{mthm:6.3} Both the values $c_1(x)$ and $c_2(x)$ converge to $p_1$ as $x$ diverges to $-\infty$.
\item \label{mthm:6.4} Both the values $c_1(x)$ and $c_2(x)$ converge to $p_2$ as $x$ diverges to $+\infty$.
\end{enumerate}
We can also have a case with the properties {\rm (}\ref{mthm:6.1}, \ref{mthm:6.3}, \ref{mthm:6.4}, \ref{mthm:6.5}{\rm )}.
\begin{enumerate}
\setcounter{enumi}{4}
\item \label{mthm:6.5} The sets $S(c_1) \subset \mathbb{R}$ and $S(c_2) \subset \mathbb{R}$ are unbounded.
\end{enumerate}
We can also have a case with the properties {\rm (}\ref{mthm:6.1}, \ref{mthm:6.3}, \ref{mthm:6.4}, \ref{mthm:6.6}{\rm )}.
\begin{enumerate}
\setcounter{enumi}{5}
\item \label{mthm:6.6} The sets $S(c_1) \subset \mathbb{R}$ and $S(c_2) \subset \mathbb{R}$ are bounded below and unbounded above.
\end{enumerate}
\end{MainThm}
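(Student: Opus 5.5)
We will imitate the proof of Main Theorem \ref{mthm:5}, replacing the hyperbola-based function $c_{{\rm H},p_1,p_2}$ by a base function with two horizontal asymptotes. Assume $p_1<p_2$ for definiteness; the case $p_1>p_2$ follows by the reflection $x \mapsto -x$ in the variable, which preserves global similarity. As base function we take a real analytic $c_{{\rm T}}:\mathbb{R} \rightarrow \mathbb{R}$ with the following properties:
\begin{itemize}
\item $c_{{\rm T}}$ is strictly increasing, so $S(c_{{\rm T}})$ is empty;
\item it converges to $p_1$ at $-\infty$ and to $p_2$ at $+\infty$;
\item its image is the open interval between $p_1$ and $p_2$.
\end{itemize}
A concrete choice is $c_{{\rm T}}(x):=p_1+(p_2-p_1)\frac{1}{2}(1+\tanh x)$. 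Its derivative $\frac{p_2-p_1}{2}\cosh^{-2}x$ is positive and decays like $e^{-2|x|}$ at both infinities.

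For the perturbation we use $\frac{2+\sin(e^{x^2})}{q(x)}$, or $\frac{2+\sin(e^{x})}{q(x)}$ for the third case, as in Propositions \ref{prop:1} and \ref{prop:3}. The denominator $q$ must be chosen so that the perturbation decays faster than the gaps $p_2-c_{{\rm T}}$ and $c_{{\rm T}}-p_1$. This forces the image to stay inside $(p_1,p_2)$, which a polynomial $q$ cannot achieve. We therefore take $q(x):=e^{3x^2}$, or multiply the perturbation by a positive factor such as $\varepsilon\cosh^{-2}(x)\cdot e^{-x^2}$. We then check three things. First, $c_{{\rm T}}+\lambda\,c_{q}$ with $\lambda\in\{\frac12,1\}$ stays in $(p_1,p_2)$, using $|c_q|\le 3\varepsilon\cosh^{-2}x\,e^{-x^2}$ against $c_{\rm T}-p_1,\ p_2-c_{\rm T}\asymp \cosh^{-2}x$ and a small $\varepsilon$. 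Second, the analogue of Proposition \ref{prop:2} holds: at the points where $\cos(e^{x^2})=\pm1$, the term $2xe^{x^2}\cos(e^{x^2})$ multiplied by the decay factor $e^{-2x^2}$ still oscillates with amplitude $\gg e^{-2|x|}$, which dominates $c_{\rm T}'$. Hence $c_{\rm T}+\lambda c_q$ has derivative of both signs on unbounded sets, and $S$ is unbounded in both directions. Third, these functions are real analytic, and $c_1,c_2$ are globally similar by definition, since they differ by a bounded smooth function.

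The three cases are set up as follows.
\begin{enumerate}
\item For (\ref{mthm:6.2}) put $c_1:=c_{\rm T}$ and $c_2:=c_{\rm T}+c_q$ with $c_q>0$, so that $c_1<c_2$.
\item For (\ref{mthm:6.5}) put $c_1:=c_{\rm T}+\frac12 c_q$ and $c_2:=c_{\rm T}+c_q$.
\item For (\ref{mthm:6.6}) use the $e^{x}$-version. Its derivative tends to $0$ faster than $c_{\rm T}'$ as $x\to-\infty$, provided the decay factor is chosen like $e^{-3|x|}$ on the left, so no critical points occur for $x\ll 0$. The critical set is then bounded below and, by the oscillation argument, unbounded above.
\end{enumerate}
Properties (\ref{mthm:6.3}) and (\ref{mthm:6.4}) follow since $c_q\to 0$.

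The main obstacle is (\ref{mthm:6.1}), verifying the hypotheses of Main Theorem \ref{mthm:1} with $Z_{\rm F}=\{p_1,p_2\}$. Away from $p_1,p_2$, the critical values $c_i(S(c_i))$ accumulate only at $p_1$ and $p_2$, because critical points escape to $\pm\infty$ where $c_i\to p_1,p_2$. Within any compact subinterval of $(p_1,p_2)$, only finitely many critical points of $c_i$ lie in the corresponding compact $x$-range, and we must check that only finitely many critical values fall in each compact subset of $\mathbb{R}-Z_{\rm F}$. For analytic functions, critical points in a compact $x$-interval are finite, so this holds and closedness follows. The level $p_1$ gives the contour of ${\pi}_{2,1}^{-1}(p_1)\cap\overline{D_{c_1,c_2}}$, which is empty or degenerate. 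The saturated-neighborhood conditions there are handled exactly as in \cite[Example 1(2)]{kitazawa9}: near $x_1=p_1$ the region is a thin tail going to $x\to-\infty$, and a closed saturated neighborhood free of critical points other than on $C_p$ is obtained by taking the slab $p_1\le x_1\le p_1+\delta$ intersected with $\overline{D_{c_1,c_2}}$. Finally, one must check that the oscillating critical points of $c_2$ near $+\infty$ only approach the level $p_2$ and do not violate the third condition. Checking this tail carefully, in particular the choice of decay factors making it monotone enough, is where I expect the real work.
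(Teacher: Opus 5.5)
Your architecture matches the paper's. The paper uses the base function $\frac{p_1e^x+p_2}{e^x+1}$, of the same logistic type as your $c_{\rm T}$, adds a positive oscillating perturbation, and uses the factor $\frac12$ for (\ref{mthm:6.5}). Your small $\varepsilon$, which keeps the perturbation below the gap, is genuinely needed and is skipped in the paper's sketch.

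The gap is in balancing oscillation frequency against decay. The paper settles this with $c_{e,1}(x)=\frac{2+\sin(e^{x^4})}{e^{x^2}}$ and $c_{e,2}(x)=\frac{2+\sin(e^{x^3})}{e^{x^2}}$: the oscillating parts of their derivatives have amplitudes $4|x|^3e^{x^4-x^2}$ and $3x^2e^{x^3-x^2}$, which tend to $\infty$. You kept the slower frequencies $e^{x^2}$ and $e^{x}$ of Propositions \ref{prop:1} and \ref{prop:3} and strengthened the decay instead, and as stated your key estimate is false.
\begin{itemize}
\item With $q(x)=e^{3x^2}$, the oscillating part of $c_q'$ is $2x\cos(e^{x^2})e^{-2x^2}$. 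Its amplitude is $\ll e^{-2|x|}\asymp c_{\rm T}'$, the opposite of your claim. So $c_{\rm T}+\lambda c_q$ is strictly monotone for $|x|$ large, and $S(c_2)$ is bounded.
\item With the factor $\varepsilon\cosh^{-2}(x)e^{-x^2}$, the factors $e^{x^2}$ and $e^{-x^2}$ cancel and the amplitude is $2\varepsilon|x|\cosh^{-2}x$. This beats $c_{\rm T}'=\frac{p_2-p_1}{2}\cosh^{-2}x$ only by the factor $|x|$. That is enough for (\ref{mthm:6.2}) and (\ref{mthm:6.5}), but you must commit to this choice and redo the estimate accordingly.
\end{itemize}

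Case (\ref{mthm:6.6}) fails as you set it up. With frequency $e^x$ the oscillating term is $\varepsilon e^x\cos(e^x)w(x)$. At $+\infty$ you need both $e^xw(x)\gg e^{-2x}$ (to beat $c_{\rm T}'$) and $w(x)\lesssim p_2-c_{\rm T}(x)\asymp e^{-2x}$ (to stay in the interval). A Gaussian-type factor such as $\cosh^{-2}(x)e^{-x^2}$ or $e^{-3x^2}$ violates the first condition, so $S(c_i)$ would be bounded above. You need $e^{-3x}\ll w\lesssim e^{-2x}$ on the right and $w=O(e^{3x})$ on the left, e.g. $w(x)=\cosh^{-2}(x)/(1+e^{-x})$. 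Alternatively, use the paper's $e^{x^3}$ frequency: the oscillation disappears at $-\infty$, where $|c_{e,2}'|\approx 4|x|e^{-x^2}\ll c_{\rm T}'$.

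Two smaller points.
\begin{itemize}
\item The reflection $x\mapsto -x$ turns ``bounded below, unbounded above'' into its opposite, so it does not give (\ref{mthm:6.6}) when $p_1>p_2$. Your $c_{\rm T}$ works verbatim in that case, so simply drop the reduction.
\item For (\ref{mthm:6.1}) there is no tail analysis to do at $Z_{\rm F}$. The images are open intervals, so ${{\pi}_{2,1}}^{-1}(p_j)\bigcap\overline{D_{c_1,c_2}}$ is empty and the three conditions are vacuous. Your slab $p_1\le x_1\le p_1+\delta$ would in fact contain infinitely many critical points in cases (\ref{mthm:6.2}) and (\ref{mthm:6.5}), so it should not be invoked. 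What remains is that the critical values are discrete and closed in $\mathbb{R}-Z_{\rm F}$. You handle this correctly, via properness of $c_i$ over compact subsets of the open interval together with isolatedness of critical points.
\end{itemize}
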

\begin{proof}
We define a smooth function $c_{e,p_1,p_2}:\mathbb{R} \rightarrow \mathbb{R}$ by $c_{e,p_1,p_2}(x):=\frac{p_1e^x+p_2}{e^x+1}$. 
Its 1st derivative is calculated as $c^{\prime}(x):=\frac{{(p_1-p_2)}e^x}{{({e^x}+1)}^2}$.
For this, the following are enjoyed.

\begin{itemize}
\item The values $c_{e,p_1,p_2}(t)$ converge to $p_1$ ($p_2$) as $t$ diverges to $-\infty$ (resp. $\infty$).
\item $c_{e,p_1,p_2}(x)-p_1=\frac{p_2-p_1}{e^x+1}$ and $c_{e,p_1,p_2}(x)-p_2=\frac{p_1-p_2}{e^x+1}$.
\item The critical set $c_{e,p_1,p_2}$ of $c_{e,p_1,p_2}$ is empty.
\item The 1st derivative ${c_{e,p_1,p_2}}^{\prime}$ is bounded. The values ${c_{e,p_1,p_2}}^{\prime}$ are always positive in the case $p_1>p_2$ and always negative in the case $p_1<p_2$. The values of the function converge to $0$ at each infinity.

\end{itemize}

We remember Propositions \ref{prop:1} and \ref{prop:2}. We define a function of a similar type we can use in the present situation. In short, we define $c_{e,1}(x):=\frac{2+\sin (e^{x^4})}{e^{x^2}}$ instead.
The values of this function are always positive and the value converges to $0$ at each infinity. Its 1st derivative is calculated as ${c_{e,1}}^{\prime}(x)=\frac{(4x^3e^{x^4}\cos (e^{x^4}))(e^{x^2})-2x(2+\sin(e^{x^4}))e^{x^2}}{e^{2x^2}}$ and we have Proposition \ref{prop:2} for this function.

We remember Propositions \ref{prop:3} and \ref{prop:4}. We define a function of a similar type we can use in the present situation. In short, we define $c_{e,2}(x):=\frac{2+\sin (e^{x^3})}{e^{x^2}}$ instead.
The values of this function are always positive and the value converges to $0$ at each infinity. Its 1st derivative is calculated as ${c_{e,2}}^{\prime}(x)=\frac{(3x^2e^{x^3}\cos (e^{x^3}))(e^{x^2})-2x(2+\sin(e^{x^3}))e^{x^2}}{e^{2x^2}}$ and we have Proposition \ref{prop:4} for this function. Remember the asymptotic behavior of the 1st derivative ${c_{e,2}}^{\prime}(x)$ again.

The argument essentially same as that in the proof of Main Theorem \ref{mthm:5} completes the proof.

\end{proof}
\subsection{Future problems.}
We present future problems. These are also interdisciplinary.

Problem \ref{prob:3} is related to \cite{katokishimototsutaya2}. In their preprint, Kato, Kishimoto, and Tsutaya, have found Morse inequalities for Morse functions on non-compact Riemannian manifolds with no boundary. More precisely, so-called {\it amenable groups} such as the group $\mathbb{Z} \subset \mathbb{R}$ of all integers act on the manifolds preserving the (Riemannian) metrics and they consider bounded functions whose 1st derivatives and some higher derivatives are also bounded and which are "uniform" with respect to distributions and locations of their critical points. They also respect symmetries related to the actions of the groups, in addition. According to them, it is an essential assumption that the 1st derivatives of the bounded Morse functions are also bounded (this is also related to the paper \cite{katokishimototsutaya1}). It is also stated in \cite{katokishimototsutaya2} that the assumptions on higher derivatives may be only a kind of technical assumptions.

\begin{Prob}
\label{prob:3}
Can theory and conjectures on Morse functions from \cite{katokishimototsutaya2} be applied to our Morse functions, presented in Theorem \ref{thm:2} (\ref{thm:2.2}). Can we obtain Morse functions for (Main) Theorems of the present paper, explicitly? This may be difficult to calculate derivatives in general. For the structures of Riemannian manifolds on our manifolds, see Remark \ref{rem:1}, for example.
\end{Prob}

Problem \ref{prob:4} is on singularity theory of differentiable maps and so-called stability of these maps. Roughly, stability of smooth maps means that singularities of smooth maps are invariant under slight perturbations. This is a kind of classical theory in singularity theory. For this, see \cite{mather1, mather2, mather3, mather4, mather5, mather6, mather7, mather8}, papers on celebrated and sophisticated theory of Mather, and for a textbook on singularity theory of differentiable maps, see \cite{golubitskyguillemin}, for example. For the space of smooth maps from a closed manifold into another smooth manifold with no boundary, so-called {\it stable} maps exist densely if the pair of the dimensions of the manifolds is nice in the sense of the presented theory of Mather. For example, if the dimensions are sufficiently low, then the pair is nice in this sense. It has been difficult to find nice criteria to verify stability for non-proper maps, where fundamental criteria have been shown for proper maps. For Morse functions on closed manifolds, 
a smooth real-valued function on a smooth manifold with no boundary is stable if and only if it is a Morse function whose critical values are distinct at distinct critical values (of the function). Note that there are other kinds of stability of smooth maps, where in the case of proper maps, these are equivalent notions. In \cite{dimca}, Dimca has studied stability of non-proper (Morse) functions on $\mathbb{R}$. In \cite{hayano}, Hayano has studied various kinds of stability of non-proper (Morse) functions on general smooth manifolds with no boundary. For example, in \cite{hayano}, a sufficient condition for a Morse function to be stable is given and stability for some explicit Morse functions are also studied. For recent studies on stability, see also \cite{ichiki1, ichiki2}. According to these studies by Ichiki, in non-proper cases, stable maps do not exist densely.
\begin{Prob}
\label{prob:4}
Can we investigate stability of Morse functions in Theorem \ref{thm:2} (\ref{thm:2.2}) and Morse functions for some (Main) Theorems explicitly?
\end{Prob}

For this, we present an explicit case as Example \ref{ex:4}. We omit rigirous exposition of the notion of {\it stable} functions, {\it strongly} stable functions, and {\it infinitesimally} stable ones.   
For rigorous understanding, refer to the original paper \cite{hayano}, with the original papers of Mather, and the textbook \cite{golubitskyguillemin}. We only present partial or complete characterizations, or necessary, sufficient, or necessary and sufficient conditions for Morse functions to be of these classes.
\begin{Ex}
\label{ex:4}
Let $c_1(x):=e^{-x^2}\sin x$ and $c_2:=\frac{a_1}{x^2+1}+a_2$ with $a_1>0$ and $a_2 \geq 0$. 
$c_1$ is, according to \cite[Theorem 4.1]{hayano}, a stable and strongly stable Morse function which is not infinitesimally stable.
$c_2$ is a Morse function with exactly one critical point $0 \in \mathbb{R}$. 
This gives a case for Main Theorem \ref{mthm:1} with $Z_{\rm F}=\{0\} \notin c_1(S(c_1)) \bigcup c_2(S(c_2))$. This is regarded to be a case of \cite[Theorem 2(2)]{kitazawa9} with "$q_1=q_2=0$".
According to \cite[Theorem 1.1 and Theorem 4.1]{hayano}, if $a_1>0$ is sufficiently large and $a_2=0$, then the function ${\pi}_{m+1,1} {\mid}_{X_{m,c_1,c_2}}$ is stable and strongly stable. 
More precisely, according to \cite[Theorem 1.1 (1)]{hayano}, 
a Morse function  $c:X \rightarrow \mathbb{R}$ on a smooth manifold $X$ with no boundary is stable if the following two hold.
\begin{itemize}
	\item At distinct critical points of $c$ the values are always distinct. Equivalently, the restriction $c {\mid}_{S(c)}$ is injective.
	\item For each point $p \supset c(S(c))$, there exist a small open neighborhood $N_p$ in $\mathbb{R}$ and an open set $V_p$ of $X$ and the following hold.
\begin{itemize}
\item $X-V_p$ is compact.
\item $c^{-1}(p) \bigcap V_p \bigcap S(c)$ is empty.
\item There exists a diffeomorphism ${\Phi}_p:(c^{-1}(p) \bigcap V_p) \times N_p \rightarrow c^{-1}(N_p) \bigcap V_p$ and the function $c {\mid}_{c^{-1}(N_p) \bigcap V_p} \circ {\Phi}_p$ gives a smooth trivial bundle over $N_p$. 
\end{itemize}
\end{itemize}
In addition, according to \cite[Theorem 1.1 (2)]{hayano}, a Morse function $c:X \rightarrow \mathbb{R}$ on a smooth manifold $X$ with no boundary is strongly stable if and only if the following two hold.
\begin{itemize}
	\item The restriction $c {\mid}_{S(c)}$ is injective.
	\item For a small open neighborhood $N(c(S(c))) \supset c(S(c))$ in $\mathbb{R}$, the restriction of $c$ to the preimage $c^{-1}(N(c(S(c))))$ is proper.
	\end{itemize}
In our case, we can check this easily. 
In addition, in the case $a_2>0$ instead, the function is not strongly stable, where it is stable. In any case, the function is not infinitesimally stable, according to general theory of stability and exposition in \cite[Subsection 2.2]{hayano}. More precisely, the restriction ${\pi}_{m+1,1} {\mid}_{S({\pi}_{m+1,1} {\mid}_{X_{m,c_1,c_2}})}$ of our Morse function ${\pi}_{m+1,1} {\mid}_{X_{m,c_1,c_2}}$ to its critical set is non-proper around $0 \in \mathbb{R}$ in the target space $\mathbb{R}$: it is known as a necessary and sufficient condition for a Morse function $c$ such that $c {\mid}_{S(c)}$ is injective to be infinitesimally stable that $c {\mid}_{S(c)}$ is proper.  

Last, we can also see that the Reeb digraph $R_{{\pi}_{m+1,1} {\mid}_{X_{m,c_1,c_2}}}$ of this Morse function is a digraph whose vertices enjoy the property as in Main Theorem \ref{mthm:2}.
\end{Ex}

Problem \ref{prob:5} seems to be a kind of problems closest to our original motivation on studies on topologies and combinatorics of Reeb graphs and Reeb spaces of smooth functions. This comes from \cite{gelbukh1, gelbukh2, saeki2, saeki3}. For proper smooth real-valued functions and continuous real-valued functions on (nice) compact Hausdorff spaces, the Reeb spaces have nice topological properties. For example, for continuous functions on {\it Peano continuums}, defined as separable, compact, connected and locally-connected Hausdorff spaces, their Reeb spaces are also homeomorphic to such spaces.
\begin{Prob}
\label{prob:5}
What can we say about topological properties and combinatorial ones of Reeb spaces of non-proper smooth functions in general? Some presented (Main) Theorems are regarded as explicit answers in very specific situations. 
\end{Prob}
 \section{Conflict of interest and Data availability.}
  \noindent {\bf Conflict of interest.} \\
 The author is a researcher at Osaka Central Advanced Mathematical Institute (OCAMI researcher). This is supported by MEXT Promotion of Distinctive Joint Research Center Program JPMXP0723833165. He thanks this, where he is not employed there. \\
  \ \\
  {\bf Data availability.} \\
  No data other than the present article is generated. We do not assume non-trivial arguments in preprints which are still unpublished, where we may refer to these preprints.

\end{document}